\documentclass[12pt,a4paper]{article}
    \usepackage{authblk}
    \usepackage{appendix}
    \usepackage{amsfonts}
    \usepackage{amsmath}
    \usepackage{amsthm}
    \usepackage{blkarray}
    \usepackage{stmaryrd}
    \usepackage{tikz-cd}
    \usepackage{hyperref}
    \usepackage{pict2e}
    \usepackage{adjustbox}
    \usepackage{xcolor}
    \usepackage{enumitem}

    \DeclareMathOperator{\rank}{rank}
    
    \theoremstyle{plain}%
    \newtheorem{theorem}{Theorem}
    \newtheorem{proposition}{Proposition}
    
    \theoremstyle{remark}%
    
    \newtheorem{remark}{Remark}%
    
    \theoremstyle{definition}%
    \newtheorem{definition}{Definition}%
    \newtheorem{lemma}{Lemma}%
    \newtheorem{example}{Example}%
    \newtheorem{corollary}{Corollary}%
    \makeatletter
    \def\th@plain{%
      \thm@notefont{}
      \itshape 
    }
    \def\th@definition{%
      \thm@notefont{}
      \normalfont 
    }
    \makeatother
    
\begin{document}
    
    \title{Classification of Lie algebras constructed from $\mathfrak{gl}_{m\vert n}$ via Derived Bracket}
    
    \author[1]{Luan Figueiredo\thanks{luanmat@ufmg.br}}

    \affil[1]{ICEX, Universidade Federal de Minas Gerais}
    \maketitle
    \begin{abstract}
Derived brackets provide a systematic mechanism for constructing algebraic structures from graded Lie superalgebras and play an important role in Poisson geometry, mathematical physics, and the theory of algebroids. In this paper, we study a family of Lie algebras obtained from the general linear Lie superalgebra $\mathfrak{gl}_{m\vert n}$ over a field $\mathbb{K}$ of characteristic zero through the derived bracket associated with an odd element $B \in \mathfrak{g}_1$ satisfying $B^2=0$.

We establish a complete structural and isomorphism classification of the resulting Lie algebras $\mathfrak{g}^B_{-1}$. For fixed dimensions $m$ and $n$, we prove that the isomorphism class of $\mathfrak{g}^B_{-1}$ is completely determined by the rank $r = \rank(B)$. More generally, for algebras arising from $\mathfrak{gl}_{m\vert n}$ and $\mathfrak{gl}_{p\vert q}$, we show that two such Lie algebras are isomorphic if and only if they have the same rank and satisfy $\{m,n\}=\{p,q\}$.

The classification is obtained through an explicit analysis of the Lie structure induced by the derived bracket and a complete computation of the Levi-Malcev decomposition. In particular, we determine the solvable radical and center and prove that the Levi factor is isomorphic to $\mathfrak{sl}(r)$.
\end{abstract}
    
    \noindent\textbf{Keywords:} Lie algebra; Lie superalgebra; derived bracket; Levi decomposition; classification.
    
    \noindent\textbf{MSC Classification:} 17B05, 17B70, 17B20, 17B30.
    
\section{Introduction}

Derived brackets provide a general framework for generating new algebraic structures from graded Lie algebras and Lie superalgebras. Although the terminology emerged later, constructions of this type appear in several areas of mathematics and mathematical physics, including Hamiltonian operators \cite{bib1}, BRST formalism \cite{bib7}, Poisson geometry \cite{bib4,bib5}, and the theory of Lie and Courant algebroids \cite{bib6,bib9}. Their systematic development has led to a unified perspective on how algebraic structures can be induced from graded or differential settings.

Classical examples illustrate the breadth of this construction. The Lie bracket of vector fields can be recovered from the Cartan identities through an iterated commutator expression, while Poisson brackets may be realized as derived brackets generated by bivector fields. These constructions demonstrate that derived brackets naturally encode nontrivial algebraic information inside larger graded structures.

In this work, we investigate a family of Lie algebras obtained from the general linear Lie superalgebra
\[
\mathfrak{g}=\mathfrak{gl}_{m\vert n}
\]
over a field $\mathbb{K}$ of characteristic zero. Endowing $\mathfrak{g}$ with its standard short $\mathbb{Z}$-grading \cite{bib3},
\[
\mathfrak{g}=\mathfrak{g}_{-1}\oplus\mathfrak{g}_{0}\oplus\mathfrak{g}_{1},
\]
and choosing an odd element $B\in\mathfrak{g}_{1}$ satisfying $B^2=0$, one obtains a Lie bracket on $\mathfrak{g}_{-1}$ through the derived construction
\[
\llbracket X,Y \rrbracket_B=[X,[B,Y]], \qquad X,Y\in\mathfrak{g}_{-1}.
\]
We denote the resulting Lie algebra by $\mathfrak{g}^B_{-1}$.

While the Lie property of this construction follows from general derived bracket theory \cite{bib5,bib8}, the structure and classification of the resulting algebras have not been previously described. The purpose of this paper is to provide a complete classification of these Lie algebras and to determine their internal structure explicitly.

Our first result concerns fixed dimensions $m$ and $n$. We prove that the Lie algebra $\mathfrak{g}^B_{-1}$ depends, up to isomorphism, only on the rank of the matrix representing $B$. More precisely, if $B_1$ and $B_2$ have the same rank, then the corresponding Lie algebras are isomorphic; otherwise they are not.

We then extend the classification across arbitrary dimensions. Given Lie algebras constructed respectively from $\mathfrak{gl}_{m\vert n}$ and $\mathfrak{gl}_{p\vert q}$, we prove that they are isomorphic precisely when the associated odd elements have the same rank and the unordered pairs of dimensions coincide:
\[
\mathfrak{g}^{B}_{-1} \simeq \mathfrak{h}^{H}_{-1} \iff \rank(B)=\rank(H) \text{ and } \{m,n\}=\{p,q\}.
\]

The key ingredient in the proof is an explicit computation of the Levi-Malcev decomposition of $\mathfrak{g}^B_{-1}$. We determine its solvable radical and center and show that its Levi factor is naturally isomorphic to $\mathfrak{sl}(r)$, where $r=\rank(B)$.

The paper is organized as follows. Section 2 recalls the necessary background on Lie superalgebras, Levi-Malcev theory, and derived brackets. Section 3 develops the structural analysis and establishes the classification for fixed dimensions. Section 4 extends the classification to arbitrary dimensions and presents illustrative examples.
      \section{Preliminaries}\label{sec2}

    \subsection{Lie Superalgebras}
    In this section we recall basic definitions of the theory of Lie algebras and Lie superalgebras. Throughout the paper we work over a field $\mathbb{K}$ of characteristic zero.

    \begin{definition}\label{d1}
    Let $G$ be an abelian group and $V = \bigoplus_{\alpha \in G} V_\alpha$ be a $G$-graded vector space. If $x \in V_\alpha$ then we say that $x$ is of homogeneous degree $\alpha$ and denote $\text{deg } x = \alpha$. From now on we denote $\mathbb{Z}_2 = \{\bar{0}, \bar{1}\}$.
    \end{definition}
    
    \begin{definition}\label{d2_new}
    A superalgebra $A = A_{\bar{0}} \oplus A_{\bar{1}}$ is a superspace over $\mathbb{K}$ such that for every $\overline{a}, \overline{b} \in \mathbb{Z}_2$:
    $$A_{\overline{a}} A_{{\overline{b}}} \subset A_{\overline{a+b}}.$$
    \end{definition}
    
    \begin{example}\label{e1_new}
    As usual we denote by $\text{End}\ V$ all linear transformations of vector space $V$. The vector space $\text{End}\ V$ is an associative algebra with respect to a composition of linear maps. In case that $V$ is a $G$-graded space, $\text{End}\ V$ admits the following $G$-grading:
    $$ \text{End}\ V = \bigoplus_{\alpha \in G} (\text{End}\ V)_\alpha, \quad (\text{End}\ V)_\alpha = \{f \in \text{End}\ V \mid f(V_s) \subseteq V_{s+\alpha}, \forall s \in G\}. $$
    In particular, for $G = \mathbb{Z}_2$ we obtain the following associative superalgebra:
    $$ \text{End}\ V = (\text{End}\ V)_{\bar{0}} \oplus (\text{End}\ V)_{\bar{1}}. $$
    We will consider only the case where $G = \mathbb{Z}_2$.
    \end{example}
    
    \begin{definition}\label{d2}
    A \textit{Lie superalgebra} $L$ is a superalgebra with a Lie superbracket operation $[\cdot, \cdot]$, satisfying:\\
    
    \noindent $i.)\ [a,b] = -(-1)^{(\text{deg } a)(\text{deg } b)}[b,a]$;\\
    $ii.)\ [a,[b,c]] = [[a,b],c] + (-1)^{(\text{deg } a)(\text{deg } b)}[b,[a,c]]$.
    \end{definition}
    
    \begin{example}
    \label{e1}
    We can define a Lie superalgebra structure on $\text{End}\ V$ in the following way:
    $$[X,Y] = X \circ Y - (-1)^{(\text{deg } X)(\text{deg } Y)} Y \circ X,$$
    for homogeneous $X,Y$. We denote this Lie superalgebra as $\mathfrak{gl}(V)$.
    \end{example}
    
    \begin{example}
    \label{e2}
    Let $V$ be a finite-dimensional superspace such that $\dim V_{\bar{0}} = m$ and $\dim V_{\bar{1}} = n$, and let us take operator $\Phi \in \mathfrak{gl}(V)$. Then we can choose basis $\beta_0 = \{e_1, \dots, e_m\}$ of $V_{\bar{0}}$ and $\beta_1 = \{e_{m+1}, \dots, e_{m+n}\}$ of $V_{\bar{1}}$. Then the pair $(\beta_0, \beta_1)$ is a basis for $V$. Now the operator $\Phi$ corresponds to the following matrix:
    \[ \Phi = \bordermatrix{ & m & n \cr
      m & A & B \cr
      n & C & D \cr} . \]
    We define the \textit{general linear Lie superalgebra} $\mathfrak{gl}_{m\vert n}$ as the $(m+n) \times (m+n)$ matrices over $\mathbb{K}$ together with the $\mathbb{Z}_2$-grading above. More precisely,
    $$ \mathfrak{gl}_{m\vert n} = \left\{ \begin{pmatrix} A & B \\ C & D \end{pmatrix} \right\},\ (\mathfrak{gl}_{m\vert n})_{\bar{0}} = \left\{ \begin{pmatrix} A & 0 \\ 0 & D \end{pmatrix} \right\},\ (\mathfrak{gl}_{m\vert n})_{\bar{1}} = \left\{ \begin{pmatrix} 0 & B \\ C & 0 \end{pmatrix} \right\}. $$
    Sometimes we denote $\mathfrak{gl}_{m\vert n}$ simply by $\mathfrak{g}$. Observe that if the same vector space $V = V_{\bar{0}} \oplus V_{\bar{1}}$ is considered as $\mathbb{Z}$-graded, this is $V_0 = V_{\bar{0}}$, $V_{-1} \oplus V_1 = V_{\bar{1}}$ and $V_k = \{0\}, \forall k \notin \{-1, 0, 1\}$, then $\mathfrak{gl}_{m\vert n}$ possesses the following $\mathbb{Z}$-grading:
    $$ \begin{pmatrix} A & 0 \\ 0 & D \end{pmatrix} \in \mathfrak{g}_0, \quad \begin{pmatrix} 0 & 0 \\ C & 0 \end{pmatrix} \in \mathfrak{g}_{-1}, \quad \begin{pmatrix} 0 & B \\ 0 & 0 \end{pmatrix} \in \mathfrak{g}_1. $$
    In particular, we see that $[\mathfrak{g}_{-1}, \mathfrak{g}_{-1}] \subseteq \mathfrak{g}_{-2} = \{0\}$ and $[\mathfrak{g}_1, \mathfrak{g}_1] \subseteq \mathfrak{g}_2 = \{0\}$.
    \end{example}
    \subsection{The Levi-Malcev Decomposition}
Let us recall the Levi-Malcev theorem. For that, we need some definitions from Lie algebra theory.
    
    \begin{definition}
    \label{d3}
    The radical of a finite-dimensional Lie algebra $L$, denoted $\mathfrak{r}(L)$, is its maximal solvable ideal. A Lie algebra $L$ is called semisimple if its radical $\mathfrak{r}$ is equal to zero.
    \end{definition}
    For ease of reading, we omit the $(L)$ in $\mathfrak{r}(L)$.
    
    \begin{lemma}
    \label{l1}
    Let $L$ be a finite dimensional Lie algebra over $\mathbb{K}$ of characteristic zero. Let $\mathfrak{r}$ be its radical and let us consider a solvable ideal $\mathfrak{a}$. Then\\
    a) $L/\mathfrak{r}$ is semisimple;\\
    b) A quotient $L/\mathfrak{a}$ is semisimple if and only if $\mathfrak{r} = \mathfrak{a}$.
    \end{lemma}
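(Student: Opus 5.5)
For part (a), I will take the canonical projection $\pi\colon L\to L/\mathfrak{r}$ and show that $L/\mathfrak{r}$ has no nonzero solvable ideal. Let $\bar{\mathfrak{s}}$ be a solvable ideal of $L/\mathfrak{r}$ and put $\mathfrak{s}=\pi^{-1}(\bar{\mathfrak{s}})$. Then $\mathfrak{s}$ is an ideal of $L$ containing $\mathfrak{r}$, and it sits in the short exact sequence
\[
0\to\mathfrak{r}\to\mathfrak{s}\to\bar{\mathfrak{s}}\to 0 .
\]
I will invoke the standard fact that an extension of a solvable Lie algebra by a solvable ideal is solvable. Concretely, if $\bar{\mathfrak{s}}^{(k)}=0$, then $\mathfrak{s}^{(k)}\subseteq\mathfrak{r}$, and hence $\mathfrak{s}^{(k+l)}\subseteq\mathfrak{r}^{(l)}=0$ for $l$ large enough. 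So $\mathfrak{s}$ is a solvable ideal of $L$. By maximality of $\mathfrak{r}$ we get $\mathfrak{s}=\mathfrak{r}$, hence $\bar{\mathfrak{s}}=0$. Thus the radical of $L/\mathfrak{r}$ is zero, and $L/\mathfrak{r}$ is semisimple in the sense of Definition \ref{d3}.

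Before this I will briefly justify that $\mathfrak{r}$ is well defined, namely that a unique maximal solvable ideal exists. If $\mathfrak{a},\mathfrak{b}$ are solvable ideals, then $\mathfrak{a}+\mathfrak{b}$ is an ideal and $(\mathfrak{a}+\mathfrak{b})/\mathfrak{b}\cong\mathfrak{a}/(\mathfrak{a}\cap\mathfrak{b})$ is solvable. The extension argument above then shows $\mathfrak{a}+\mathfrak{b}$ is solvable. Finite dimensionality then gives a largest solvable ideal.

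For part (b), the direction ``$\mathfrak{a}=\mathfrak{r}\Rightarrow L/\mathfrak{a}$ semisimple'' is exactly (a). Conversely, suppose $L/\mathfrak{a}$ is semisimple with $\mathfrak{a}$ a solvable ideal. By maximality, $\mathfrak{a}\subseteq\mathfrak{r}$. The image $\mathfrak{r}/\mathfrak{a}$ is an ideal of $L/\mathfrak{a}$, and it is solvable because it is a homomorphic image of the solvable algebra $\mathfrak{r}$ (derived series map onto derived series). Semisimplicity of $L/\mathfrak{a}$ forces $\mathfrak{r}/\mathfrak{a}=0$, i.e.\ $\mathfrak{r}=\mathfrak{a}$.

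No step is a real obstacle. The only point needing care is the extension lemma $\mathfrak{s}^{(k)}\subseteq\mathfrak{r}$, which follows from $\pi(\mathfrak{s}^{(k)})=\bar{\mathfrak{s}}^{(k)}$. Characteristic zero is not needed here, but it is harmless.
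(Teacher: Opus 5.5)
Your proof is correct and follows the same route as the paper: for (a) you pull back a solvable ideal of $L/\mathfrak{r}$ to a solvable ideal of $L$ containing $\mathfrak{r}$ and use maximality, and for (b) you note that $\mathfrak{r}/\mathfrak{a}$ is a solvable ideal of the semisimple quotient. You also justify two points the paper uses without comment: that the extension $\pi^{-1}(J)$ is solvable, and that $\mathfrak{a}\subseteq\mathfrak{r}$, which follows from the existence of a largest solvable ideal.
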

    
    \begin{proof}
    a) Let $\pi: L \to L/\mathfrak{r}$ be the canonical map and $J$ be a solvable ideal of $L/\mathfrak{r}$. Then $U = \pi^{-1}(J)$, $\mathfrak{r} \subseteq U$ and $U/\mathfrak{r} \simeq J$. As $U$ is solvable and $\mathfrak{r}$ is maximal, $U/\mathfrak{r} = 0$, therefore, $J = 0$;\\
    b) As $L/\mathfrak{a}$ is semisimple, $\mathfrak{r}/\mathfrak{a} = 0$ then $\mathfrak{r} = \mathfrak{a}$. Conversely, if $\mathfrak{r} = \mathfrak{a}$ then, by the item ``a", $L/\mathfrak{a}$ is semisimple.
    \end{proof}
    
    The Levi-Malcev Theorem states that a finite dimensional Lie algebra $L$ is a semidirect sum of a semisimple subalgebra and the solvable radical of $L$. The semisimple subalgebra, also called Levi subalgebra, is unique up to an automorphism.

    \begin{theorem}
    \label{t1}
    (Levi-Malcev) Let $L$ be a finite dimensional Lie algebra. If $L$ is not solvable, then there exists a semisimple subalgebra $\mathfrak{s}$ of $L$ such that $L = \mathfrak{s} \oplus \mathfrak{r}$ and $\mathfrak{s} \simeq L/\mathfrak{r}$.
    Furthermore, if $\mathfrak{s}$ and $\mathfrak{s}'$ are semisimple subalgebras of $L$ with $L = \mathfrak{s} \oplus \mathfrak{r} = \mathfrak{s}' \oplus \mathfrak{r}$, then there exists an automorphism $\sigma$ of $L$ such that $\sigma(\mathfrak{s}) = \mathfrak{s}'$.
    \end{theorem}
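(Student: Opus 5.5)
The plan is the classical cohomological argument, with existence and conjugacy handled separately. Throughout I use Lemma \ref{l1}: $L/\mathfrak{r}$ is semisimple, and I fix the canonical projection $\pi: L\to L/\mathfrak{r}$. The statement is vacuous if $\mathfrak{r}=0$ (take $\mathfrak{s}=L$). If $L$ is not solvable, $\mathfrak{r}\neq L$, so $\mathfrak{g}:=L/\mathfrak{r}$ is a nonzero semisimple algebra. Any subalgebra $\mathfrak{s}$ complementary to $\mathfrak{r}$ is then automatically isomorphic to $\mathfrak{g}$ via $\pi$, hence semisimple. So existence amounts to finding a Lie algebra section of $\pi$.

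\textbf{Existence.} I argue by induction on $\dim \mathfrak{r}$.

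First suppose $[\mathfrak{r},\mathfrak{r}]\neq 0$. Then $[\mathfrak{r},\mathfrak{r}]$ is an ideal of $L$ (Jacobi identity) and is strictly smaller than $\mathfrak{r}$, since $\mathfrak{r}$ is solvable. Apply induction to $\bar L = L/[\mathfrak{r},\mathfrak{r}]$, whose radical is $\mathfrak{r}/[\mathfrak{r},\mathfrak{r}]$ (by Lemma \ref{l1}b). This gives a subalgebra $\bar{\mathfrak{s}}$ complementary to that radical. Its preimage $L'$ in $L$ has radical $[\mathfrak{r},\mathfrak{r}]$, which is smaller than $\mathfrak{r}$, and $L'/[\mathfrak{r},\mathfrak{r}]\simeq\mathfrak{g}$. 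Applying induction again to $L'$ yields the desired $\mathfrak{s}\subseteq L'$.

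It remains to treat $\mathfrak{r}$ abelian. Then $\mathfrak{r}$ is a $\mathfrak{g}$-module via $\pi(x)\cdot a=[x,a]$. Choose any linear section $\sigma:\mathfrak{g}\to L$ and set
$$\omega(u,v)=[\sigma u,\sigma v]-\sigma[u,v]\in\mathfrak{r}.$$
The Jacobi identity shows that $\omega$ is a $2$-cocycle in $C^2(\mathfrak{g},\mathfrak{r})$. Changing $\sigma$ to $\sigma+f$ with $f:\mathfrak{g}\to\mathfrak{r}$ linear changes $\omega$ by the coboundary $df$. Hence a Lie section exists if and only if $[\omega]=0$ in $H^2(\mathfrak{g},\mathfrak{r})$.

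This is where \textbf{Whitehead's second lemma} enters: $H^2(\mathfrak{g},M)=0$ for every finite-dimensional module $M$ of a semisimple $\mathfrak{g}$ in characteristic zero. I would prove it by decomposing $M$ into isotypic pieces, which is possible by Weyl's complete reducibility. For a nontrivial irreducible $M$, the Casimir element $c_M$ acts invertibly and is homotopic to zero on the cochain complex, which kills all cohomology. For trivial $M=\mathbb{K}$, use the Killing form $\kappa$: a $2$-cocycle $\omega$ defines a derivation $D$ by $\kappa(Du,v)=\omega(u,v)$. Every derivation is inner, $D=\mathrm{ad}\,z$, so $\omega(u,v)=\kappa(z,[u,v])$, which is a coboundary. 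I expect this cohomology vanishing to be the main technical obstacle. I would quote complete reducibility and the inner-ness of derivations as standard facts rather than reprove them.

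\textbf{Conjugacy.} Let $\mathfrak{s},\mathfrak{s}'$ be two complements of $\mathfrak{r}$. Write $\mathfrak{s}'=\{x+\varphi(x): x\in\mathfrak{s}\}$ with $\varphi:\mathfrak{s}\to\mathfrak{r}$ linear. I would again induct on $\dim\mathfrak{r}$, now reducing modulo the nilradical-type ideal $[L,\mathfrak{r}]$. This ideal is nilpotent, since $\mathrm{ad}$ of elements of $[L,\mathfrak{r}]$ is nilpotent on $L$, by Lie's theorem applied to $\mathfrak{r}$ acting on $L$ (characteristic zero).

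In the abelian-radical step, the subalgebra condition says $\varphi$ is a $1$-cocycle. By \textbf{Whitehead's first lemma} ($H^1=0$, proved by the same Casimir argument together with $[\mathfrak{g},\mathfrak{g}]=\mathfrak{g}$) it is a coboundary, $\varphi(x)=[x,a]$ for some $a\in\mathfrak{r}$. Then $\exp(\mathrm{ad}\,a)=1+\mathrm{ad}\,a$ is an automorphism mapping $\mathfrak{s}$ onto $\mathfrak{s}'$.

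In general I would lift through the chain $\mathfrak{r}\supseteq[L,\mathfrak{r}]\supseteq\cdots$. The difficulty is that the elements $a$ produced at each stage must make $\exp(\mathrm{ad}\,a)$ well defined. This requires $a$ to lie in a nilpotent ideal where $\mathrm{ad}\,a$ is nilpotent, and choosing $a\in[\mathfrak{s},\mathfrak{r}]\subseteq[L,\mathfrak{r}]$ ensures this. Composing these special automorphisms then gives $\sigma$.
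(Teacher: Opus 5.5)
The paper gives no argument for this theorem; it only points to the literature, so you are supplying the standard textbook proof. Your existence half is correct as written: the reduction modulo $[\mathfrak{r},\mathfrak{r}]$, the cocycle $\omega$, and Whitehead's second lemma via complete reducibility, the Casimir and the Killing form all work.

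The conjugacy half has a real gap in how the induction is organized. Inducting on $\dim\mathfrak{r}$ while ``reducing modulo $[L,\mathfrak{r}]$'' makes no progress when $[L,\mathfrak{r}]=\mathfrak{r}$. Likewise, the chain $\mathfrak{r}\supseteq[L,\mathfrak{r}]\supseteq[L,[L,\mathfrak{r}]]\supseteq\cdots$ need never reach an abelian term, let alone $0$. For example, let $L$ be the semidirect product of $\mathfrak{sl}(2)=\mathfrak{sp}(2)$ with the Heisenberg algebra $\mathfrak{h}_3=V\oplus\mathbb{K}c$, where $V=\mathbb{K}^2$ carries the standard action and $c$ the trivial one. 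Then $\mathfrak{r}=\mathfrak{h}_3$ is nonabelian and $[L,\mathfrak{r}]=V+\mathbb{K}c=\mathfrak{r}$. So your chain is constant and your abelian-radical step never applies. Moreover, the successive quotients of that chain are \emph{central}. On them the cocycle vanishes trivially, so $H^1$ with nontrivial coefficients, which is the real content, is never used.

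The fix is to run down a filtration by ideals of $L$ with abelian quotients that actually ends at $0$. Put $\mathfrak{n}=[L,\mathfrak{r}]$.

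First, $\mathfrak{r}/\mathfrak{n}$ is central in $L/\mathfrak{n}$, and $\mathfrak{s}=[\mathfrak{s},\mathfrak{s}]$. Hence the images of $\mathfrak{s}$ and $\mathfrak{s}'$ both equal $[L/\mathfrak{n},L/\mathfrak{n}]$, which gives $\mathfrak{s}+\mathfrak{n}=\mathfrak{s}'+\mathfrak{n}$.

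Next, use the lower central series $\mathfrak{n}=\mathfrak{n}^1\supseteq\mathfrak{n}^2\supseteq\cdots\supseteq\mathfrak{n}^k=0$, whose terms are ideals of $L$. Suppose $\mathfrak{s}_j+\mathfrak{n}^j=\mathfrak{s}'+\mathfrak{n}^j$.
\begin{itemize}
\item Apply your abelian step in $(\mathfrak{s}'+\mathfrak{n}^j)/\mathfrak{n}^{j+1}$ and lift the resulting $\bar a$ to $a\in\mathfrak{n}^j$.
\item Then $\mathrm{ad}\,a$ sends $L$ into $\mathfrak{n}^j$ and $\mathfrak{n}^i$ into $\mathfrak{n}^{i+j}$. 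So it is nilpotent and satisfies $(\mathrm{ad}\,a)^2L\subseteq\mathfrak{n}^{j+1}$.
\item Consequently $\exp(\mathrm{ad}\,a)$ is an automorphism of $L$ that preserves $\mathfrak{s}'+\mathfrak{n}^j$ and induces $1+\mathrm{ad}\,\bar a$ on the quotient.
\item Therefore $\mathfrak{s}_{j+1}=\exp(\mathrm{ad}\,a)\mathfrak{s}_j$ agrees with $\mathfrak{s}'$ modulo $\mathfrak{n}^{j+1}$.
\end{itemize}

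Alternatively, reduce modulo $[\mathfrak{r},\mathfrak{r}]$ exactly as in your existence proof. In that case you must strengthen the induction hypothesis to ``the conjugating automorphism is a product of $\exp(\mathrm{ad}\,a)$ with $a\in[L,\mathfrak{r}]$'', so that automorphisms of the quotient lift to $L$. This is the place where your choice $a\in[\mathfrak{s},\mathfrak{r}]$ is genuinely needed.

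Two smaller corrections:
\begin{itemize}
\item Lie's theorem applied to $\mathfrak{r}$ alone only shows that $\mathrm{ad}[\mathfrak{r},\mathfrak{r}]$ is nilpotent. To get nilpotency of $[L,\mathfrak{r}]$, you need that $\mathfrak{r}$ acts by scalars on each $L$-composition factor of $L$ over $\overline{\mathbb{K}}$ (Lie's theorem plus the invariance lemma). Then $[L,\mathfrak{r}]$ acts by zero on each factor.
\item If $\varphi(x)=[x,a]$, then $\exp(\mathrm{ad}\,a)$ sends $x$ to $x-\varphi(x)$. So the automorphism you want is $\exp(-\mathrm{ad}\,a)$.
\end{itemize}
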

    \begin{proof}
    See \cite{bib2} for details.
    \end{proof}
    
    \subsection{Derived Brackets}
    Let $L = (V,[\cdot ,\cdot ])$ be a graded Lie superalgebra with bracket of degree $n$, {i. e.}, $[V_\alpha, V_\beta] \subset V_{(\alpha + \beta + n)}$ and $D$ be a  derivation of $L$.
    The derived bracket by $D$ can be defined on $L$  as: $$(x,y) \in V \times V \mapsto (-1)^{(n+ deg\ x)+1}[Dx,y] \in V.$$
    
    For our purposes we will use a specific version of this definition, which is valid when $D$  is an interior derivation of $L$ by an odd element $B$ of square zero and $x,y$ have odd degree. {In this case, the expression above becomes }
    $$(x,y) \in V \times V \mapsto [[B,x],y] \in V.$$
    In this paper we will consider Lie superalgebras with bracket of degree $0$, and we rewrite the definition equivalently as
    $$0=[B,[x,y]] = [[B,x],y] - [x,[B,y]] \implies [[B,x],y] = [x,[B,y]]. $$ 
    For more details, see \cite{bib5}.
    
    \begin{definition}
    \label{d4}
    Let $L=L_{-1}\oplus L_{0} \oplus L_{1}$ be a $\mathbb{Z}$-graded Lie superalgebra over $\mathbb{K}$ with bracket $[\cdot,\cdot]$ and $B\in L_{1}$. We can define the following bilinear map $\llbracket \cdot ,\cdot \rrbracket_{B}: L_{-1} \times L_{-1} \to$ {$L_{-1}$} \[ \llbracket x,y\rrbracket_{B} = [x,[B,y]],\] We call this new bracket the derived bracket of $[\cdot  ,\cdot ]$ by $B$.
    \end{definition}

    Sometimes for simplicity we will denote $\llbracket ., .\rrbracket_{B}$ simply by $\llbracket ., .\rrbracket$.
The following theorem can be deduced from the results of \cite{bib5, bib8}.
    \begin{theorem}
    \label{t3}
    Let $L=L_{-1}\oplus L_0\oplus L_1$ be a $\mathbb{Z}$-graded Lie superalgebra and let $\llbracket\cdot,\cdot\rrbracket_B$ be the derived bracket by $B$. Then $(L_{-1},\llbracket\cdot,\cdot\rrbracket_B)$ is a Lie algebra.
    \end{theorem}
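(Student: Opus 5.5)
The plan is to check bilinearity, closure, antisymmetry and the Jacobi identity directly, using only the super Jacobi identity of Definition~\ref{d2} and the grading. Write $d=\operatorname{ad}_B=[B,\cdot]$, an odd derivation of $L$ (by Definition~\ref{d2}\,ii). Two facts from the short grading drive everything. First, $[L_{-1},L_{-1}]\subseteq L_{-2}=0$. Second, $[B,B]\in L_2=0$, so $d^2=\tfrac12\operatorname{ad}_{[B,B]}=0$; thus the hypothesis $B^2=0$ is automatic here. Bilinearity is clear. For closure, $[B,y]\in L_0$ for $y\in L_{-1}$, and then $[x,[B,y]]\in L_{-1}$.

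\emph{Step 1 (symmetry identity).} For $x,y\in L_{-1}$ I apply super Jacobi to $x,B,y$, where $x$ and $B$ are both odd:
\[
[x,[B,y]]=[[x,B],y]-[B,[x,y]].
\]
The last term vanishes since $[x,y]=0$. Moreover $[x,B]=[B,x]$, because odd elements supercommute symmetrically. Hence
\[
\llbracket x,y\rrbracket=[x,dy]=[dx,y].
\]
This is the identity $[[B,x],y]=[x,[B,y]]$ recorded in the preliminaries.

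\emph{Step 2 (antisymmetry).} Since $dx$ is even, $[dx,y]=-[y,dx]$. Combining this with Step 1 gives $\llbracket x,y\rrbracket=-\llbracket y,x\rrbracket$.

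\emph{Step 3 (Jacobi, in Leibniz form).} I will prove
\[
\llbracket x,\llbracket y,z\rrbracket\rrbracket=\llbracket\llbracket x,y\rrbracket,z\rrbracket+\llbracket y,\llbracket x,z\rrbracket\rrbracket.
\]
The left side is $[x,d[y,dz]]$. Since $d$ is odd and $y$ is odd, the derivation rule gives $d[y,dz]=[dy,dz]-[y,d^2z]$, and the second term vanishes because $d^2=0$. Next I expand $[x,[dy,dz]]$ by Jacobi, with $dy$ even:
\[
[x,[dy,dz]]=[[x,dy],dz]+[dy,[x,dz]].
\]
The first term is $\llbracket\llbracket x,y\rrbracket,z\rrbracket$. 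For the second term, put $w=[x,dz]=\llbracket x,z\rrbracket\in L_{-1}$. By Step 1, $[dy,w]=\llbracket y,w\rrbracket$, which is the remaining term.

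The only delicate point is the sign bookkeeping in the super Jacobi identity and in the derivation rule for the odd map $d$. The key trick is the use of Step 1 in the last line, which converts $[dy,\cdot]$ back into the derived bracket. Together with antisymmetry, the Leibniz form is equivalent to the Jacobi identity, so $(L_{-1},\llbracket\cdot,\cdot\rrbracket_B)$ is a Lie algebra.
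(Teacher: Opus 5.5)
Your proof is correct and takes essentially the same route as the paper. You use the super Jacobi identity with $[x,y]=0$ to get $[x,[B,y]]=[[B,x],y]$ and deduce antisymmetry from it. You then prove the Leibniz form of Jacobi by rewriting $[B,[y,[B,z]]]=[[B,y],[B,z]]$ (via $[B,B]=0$) and expanding with Jacobi once more. The only difference is cosmetic: you convert the last term with the identity $[dy,w]=\llbracket y,w\rrbracket$ directly, whereas the paper uses super-antisymmetry together with the anticommutativity of $\llbracket\cdot,\cdot\rrbracket_B$.
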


    \begin{proof}
    We verify directly that the bracket
    \[
    \llbracket X,Y\rrbracket_B=[X,[B,Y]]
    \]
    is skew-symmetric and satisfies the Jacobi identity for all $X,Y,Z\in L_{-1}$.
    
    Since $L_{-1}$ is abelian, we have $[X,Y]=0$ for all $X,Y\in L_{-1}$. Hence, by the graded Jacobi identity,
    \[
    \llbracket X,Y\rrbracket_B=[X,[B,Y]]
    = [[X,B],Y]-[B,[X,Y]]
    = [[X,B],Y].
    \]
    Using the skew-symmetry of the Lie superbracket and the fact that $X,B$ are homogeneous, we obtain
    \[
    [[X,B],Y]=-[Y,[X,B]]=-[Y,[B,X]]=-\llbracket Y,X\rrbracket_B.
    \]
    Therefore, $\llbracket \cdot,\cdot\rrbracket_B$ is anticommutative.
    
    Now let $X,Y,Z\in L_{-1}$. We compute
    \[
    \llbracket X,\llbracket Y,Z\rrbracket_B\rrbracket_B=[X,[B,[Y,[B,Z]]]].
    \]
    Since $[B,B]=0$, the graded Jacobi identity gives
    \[
    [B,[Y,[B,Z]]] = [[B,Y],[B,Z]].
    \]
    Thus
    \[
    \llbracket X,\llbracket Y,Z\rrbracket_B\rrbracket_B=[X,[[B,Y],[B,Z]]].
    \]
    Applying the Jacobi identity once more, we get
    \[
    [X,[[B,Y],[B,Z]]]
    =
    [[X,[B,Y]],[B,Z]]+[[B,Y],[X,[B,Z]]].
    \]
    The first term is
    \[
    [[X,[B,Y]],[B,Z]] = [\llbracket X,Y\rrbracket_B,[B,Z]] = \llbracket \llbracket X,Y\rrbracket_B,Z\rrbracket_B.
    \]
    For the second term, using the already proved anticommutativity, we may rewrite it as
    \[
    [[B,Y],[X,[B,Z]]]
    =
    -[[X,[B,Z]],[B,Y]]
    =
    \llbracket Y,\llbracket X,Z\rrbracket_B\rrbracket_B.
    \]
    Hence
    \[
    \llbracket X,\llbracket Y,Z\rrbracket_B\rrbracket_B = \llbracket \llbracket X,Y\rrbracket_B,Z\rrbracket_B + \llbracket Y,\llbracket X,Z\rrbracket_B\rrbracket_B,
    \]
    which is the Jacobi identity for the bracket $\llbracket\cdot,\cdot\rrbracket_B$.
    
    Therefore $(L_{-1},\llbracket\cdot,\cdot\rrbracket_B)$ is a Lie algebra.
    \end{proof}
    
    \begin{remark}
    Our main interest is when $L = \mathfrak{gl}_{m\vert n}.$ From now on, we denote the Lie algebra  $(\mathfrak{g}_{-1},\llbracket \cdot,\cdot \rrbracket_B)$ as $\mathfrak{g}^{B}_{-1}$.
    \end{remark}
    \section{Structure and Levi-Malcev Decomposition of $\mathfrak{g}_{-1}^{B}$}\label{sec3}
    \subsection{Reduction to Normal Form}\label{sec3.1}
    Our results are the classification of Lie algebras $\mathfrak{g}^{B}_{-1}$ where $\mathfrak{g}_{-1}$ is of size $m\times n$ and $B\in \mathfrak{g}_1$ is of rank $r$, as in Lemma~\ref{l2} below. First, we consider the case when the values of $m,n$ and $r$ are fixed, and later when they are arbitrary.
    
    \paragraph{Block notation.}
Matrices in $\mathfrak{gl}_{m\vert n}$ are written in block form
\[
X=
\bordermatrix{
&m&n\cr
m&\mathbf a&\mathbf b\cr
n&\mathbf c&\mathbf d
}.
\]

After fixing a rank-$r$ normal form for $B$, matrices in
$\mathfrak g_{-1}$ are further decomposed according to
\[
(r,n-r)\times(r,m-r):
\]

\[
\mathbf x=
\bordermatrix{
&r&m-r\cr
r&x_{11}&x_{12}\cr
n-r&x_{21}&x_{22}
}.
\]

This block decomposition will be used throughout the structural and
classification arguments.

The following lemma is a standard result from linear algebra.
\begin{lemma}\label{l2}
Let $B$ be a block matrix of the form
\[ B = \bordermatrix{ & m & n \cr m & 0 & \mathbf{b} \cr n & 0 & 0 \cr}, \]
where $\mathbf{b}$ is a matrix of size $m \times n$ and $\rank \mathbf{b} = r$.
Then there exist invertible matrices $P_m$ and $P_n$ such that
$P_m \mathbf{b} P_n = \mathbf{b}'$, where
$\mathbf{b}' = \left(\begin{smallmatrix} I_r & 0 \\ 0 & 0 \end{smallmatrix}\right)$.
We call $B'$ the \textit{normal form of $B$}.
\end{lemma}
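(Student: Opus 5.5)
The plan is to prove the rank normal form directly by row and column reduction, i.e.\ by choosing adapted bases of the domain and codomain of the linear map $\mathbf{b}\colon \mathbb{K}^n\to\mathbb{K}^m$. Since $\rank\mathbf{b}=r$, the kernel $\ker\mathbf{b}\subseteq\mathbb{K}^n$ has dimension $n-r$ by rank--nullity. I choose a complement $W$ of $\ker\mathbf{b}$ in $\mathbb{K}^n$, a basis $w_1,\dots,w_r$ of $W$, and a basis $w_{r+1},\dots,w_n$ of $\ker\mathbf{b}$. These together form a basis of $\mathbb{K}^n$.

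The next step is to show that the images $u_i=\mathbf{b}w_i$ for $1\le i\le r$ are linearly independent. If $\sum_{i\le r} c_iu_i=0$, then $\sum_{i\le r} c_iw_i\in W\cap\ker\mathbf{b}=0$, so every $c_i=0$. I then extend $u_1,\dots,u_r$ to a basis $u_1,\dots,u_m$ of $\mathbb{K}^m$.

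With these bases, let $Q_n$ be the invertible matrix whose columns are $w_1,\dots,w_n$, and $Q_m$ the invertible matrix whose columns are $u_1,\dots,u_m$. Then $\mathbf{b}Q_n$ has columns $u_1,\dots,u_r,0,\dots,0$. Hence $Q_m^{-1}\mathbf{b}Q_n=\left(\begin{smallmatrix} I_r&0\\0&0\end{smallmatrix}\right)$, and setting $P_m=Q_m^{-1}$ and $P_n=Q_n$ gives the claimed normal form.

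For later use, I will also record that $B'=\left(\begin{smallmatrix}P_m&0\\0&P_n^{-1}\end{smallmatrix}\right)B\left(\begin{smallmatrix}P_m&0\\0&P_n^{-1}\end{smallmatrix}\right)^{-1}$. This is conjugation by an even invertible matrix, so it is an automorphism of $\mathfrak{gl}_{m\vert n}$ that preserves the $\mathbb{Z}$-grading. This is presumably why the normal form matters in what follows.

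None of the steps is a real obstacle, since this is standard linear algebra. The only point requiring care is the linear independence of the $u_i$, which rests on choosing $W$ as a complement of the kernel.
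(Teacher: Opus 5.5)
Your proof is correct, but it is not a row-and-column reduction, despite your opening phrase, and it differs from the paper's argument. The paper uses Gaussian elimination. Elementary row operations (left multiplication by an invertible $P_m$) bring $\mathbf{b}$ to a form with $r$ pivots. Elementary column operations (right multiplication by an invertible $P_n$) then move the pivots onto the leading diagonal and clear everything else, so $P_m$ and $P_n$ come out as products of elementary matrices. You instead write down $P_n$ and $P_m^{-1}$ directly as matrices of bases adapted to $\ker\mathbf{b}$ and $\operatorname{im}\mathbf{b}$. This needs only rank--nullity, the existence of a complement $W$, and basis extension. The elimination route gives an explicit algorithm for computing $P_m,P_n$, but it relies on facts about echelon forms that the paper leaves implicit, such as that the number of pivots equals the rank. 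Your route avoids echelon forms entirely and checks its one substantive step, the independence of the $u_i$ via $W\cap\ker\mathbf{b}=0$, explicitly. It also makes the meaning of the normal form visible: the last $n-r$ columns of $P_n$ form a basis of $\ker\mathbf{b}$, and the first $r$ columns of $P_m^{-1}$ form a basis of $\operatorname{im}\mathbf{b}$. Your closing observation, that $B'$ is the conjugate of $B$ by the even matrix $\operatorname{diag}(P_m,P_n^{-1})$, is exactly the conjugation by $P$ used in the proof of Theorem~\ref{t4}.
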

\begin{proof}
Since $\rank \mathbf{b} = r$, elementary row operations reduce $\mathbf{b}$
to a matrix with $r$ pivot columns. These operations correspond to left
multiplication by an invertible matrix $P_m$. Applying elementary column
operations to the result, corresponding to right multiplication by an
invertible $P_n$, places the pivots along the leading diagonal and
eliminates all other entries, yielding $\mathbf{b}' = \bigl(\begin{smallmatrix}
I_r & 0 \\ 0 & 0 \end{smallmatrix}\bigr)$.
\end{proof}

    For convenience, from now on we will use only the normal form of $B$ and we say that $B'$ has rank $r$.
    In the following lemma we describe the Lie algebra structure of $\mathfrak{g}^{B'}_{-1}$ explicitly.
    \begin{lemma}
    \label{l3}
Let $B \in \mathfrak{g}_{1}$, $X,Y \in \mathfrak{g}_{-1}$ and let $\rank B=r$. Then \\
    a) For any $B$ we have that \[ \llbracket X , Y \rrbracket_B =  [X,[B,Y]] = \bordermatrix{ & m & n \cr
      m & 0 & 0 \cr
      n & \textbf{xby} - \textbf{ybx} & 0 \cr}. \]
    
    b) Let $\textbf{x}, \textbf{y}$ and $\textbf{b}$ be in the following form:
    \[ \textbf{x} = \begin{blockarray}{ccc}
      & r & m-r \\
      \begin{block}{c(cc)}
        r & x_{11} & x_{12} \\
        n-r & x_{21} & x_{22} \\
      \end{block}
    \end{blockarray},\quad \textbf{y} = \begin{blockarray}{ccc}
      & r & m-r \\
      \begin{block}{c(cc)}
        r & y_{11} & y_{12} \\
        n-r & y_{21} & y_{22} \\
      \end{block}
    \end{blockarray},\quad \textbf{b} = \begin{blockarray}{ccc}
      & r & n-r \\
      \begin{block}{c(cc)}
        r & I_r & 0 \\
        m-r & 0 & 0 \\
      \end{block}
    \end{blockarray}. \] Then \[
     \textbf{xby} - \textbf{ybx} = \begin{blockarray}{ccc}
      & r & m-r \\
      \begin{block}{c(cc)}
        r & x_{11}y_{11}-y_{11}x_{11} & x_{11}y_{12}-y_{11}x_{12} \\
        n-r & x_{21}y_{11}-y_{21}x_{11} & x_{21}y_{12}-y_{21}x_{12} \\
      \end{block}
    \end{blockarray}. \]
    \end{lemma}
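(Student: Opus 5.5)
Both parts are direct block-matrix computations in $\mathfrak{gl}_{m\vert n}$. The only care needed is the sign convention for the superbracket between odd elements.

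For part a), write $X=\left(\begin{smallmatrix}0&0\\ \mathbf x&0\end{smallmatrix}\right)$, $Y=\left(\begin{smallmatrix}0&0\\ \mathbf y&0\end{smallmatrix}\right)$ and $B=\left(\begin{smallmatrix}0&\mathbf b\\0&0\end{smallmatrix}\right)$. All three are odd, so the bracket of Example~\ref{e1} is the anticommutator. I first compute
\[
[B,Y]=BY+YB=\begin{pmatrix}\mathbf{by}&0\\0&\mathbf{yb}\end{pmatrix},
\]
which is even and lies in $\mathfrak g_0$. Since $X$ is odd and $[B,Y]$ is even, the next bracket is the ordinary commutator:
\[
[X,[B,Y]]=X[B,Y]-[B,Y]X .
\]
Multiplying blocks, $X[B,Y]$ has lower-left block $\mathbf{xby}$ and $[B,Y]X$ has lower-left block $\mathbf{yb}\,\mathbf x$; every other block vanishes. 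This gives the stated form, with $\mathbf{xby}-\mathbf{ybx}$ in position $(n,m)$. The step to watch is keeping the signs straight: the first bracket is $+$ because both entries are odd, and the second is $-$ because one entry is even. Part a) holds for arbitrary $\mathbf b$, so no normal form is needed here.

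For part b), I substitute the normal form of Lemma~\ref{l2}. The key observation is that $\mathbf b=\left(\begin{smallmatrix}I_r&0\\0&0\end{smallmatrix}\right)$, of size $m\times n$ with blocks $(r,m-r)\times(r,n-r)$, acts as a projection onto the first $r$ coordinates. Consequently $\mathbf{x}\mathbf{b}\mathbf{y}=\mathbf x_{\ast 1}\,\mathbf y_{1\ast}$. Here $\mathbf x_{\ast1}=\left(\begin{smallmatrix}x_{11}\\x_{21}\end{smallmatrix}\right)$ is the first block column of $\mathbf x$, of size $n\times r$, and $\mathbf y_{1\ast}=(y_{11}\ y_{12})$ is the first block row of $\mathbf y$, of size $r\times m$. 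Multiplying these out gives
\[
\mathbf{xby}=\begin{pmatrix}x_{11}y_{11}&x_{11}y_{12}\\ x_{21}y_{11}&x_{21}y_{12}\end{pmatrix}.
\]
By symmetry, $\mathbf{ybx}$ is the same matrix with $x$ and $y$ interchanged. Subtracting the two yields exactly the matrix in the statement. The only point to check is that the block sizes are compatible: $\mathbf x,\mathbf y$ are $n\times m$, split as $(r,n-r)\times(r,m-r)$, while $\mathbf b$ is $m\times n$, split as $(r,m-r)\times(r,n-r)$. I expect no real obstacle beyond this bookkeeping.
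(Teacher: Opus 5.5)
Your proposal is correct and follows the paper's proof: you do the same two-step superbracket computation ($[B,Y]=BY+YB=\mathrm{diag}(\mathbf{by},\mathbf{yb})$, then an ordinary commutator with $X$) for part a). For part b) you also use direct block multiplication, which you spell out more explicitly than the paper by factoring $\mathbf{xby}$ as the first block column of $\mathbf{x}$ times the first block row of $\mathbf{y}$.
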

    \begin{proof}
    Calculating the bracket, we have:
    \begin{align*}
    [B,Y] &= \begin{pmatrix} 0 & \textbf{b} \\ 0 & 0 \end{pmatrix} \begin{pmatrix} 0 & 0 \\ \textbf{y} & 0 \end{pmatrix} - (-1)^{(1)(-1)} \begin{pmatrix} 0 & 0 \\ \textbf{y} & 0 \end{pmatrix} \begin{pmatrix} 0 & \textbf{b} \\ 0 & 0 \end{pmatrix} \\
    &= \begin{pmatrix} \textbf{by} & 0 \\ 0 & \textbf{yb} \end{pmatrix}.
    \end{align*}
    Applying the bracket with $X$, we have:
    \begin{align*}
    [X,[B,Y]] &= \begin{pmatrix} 0 & 0 \\ \textbf{x} & 0 \end{pmatrix} \begin{pmatrix} \textbf{by} & 0 \\ 0 & \textbf{yb} \end{pmatrix} - \begin{pmatrix} \textbf{by} & 0 \\ 0 & \textbf{yb} \end{pmatrix} \begin{pmatrix} 0 & 0 \\ \textbf{x} & 0 \end{pmatrix} \\
    &= \begin{pmatrix} 0 & 0 \\ \textbf{xby} & 0 \end{pmatrix} - \begin{pmatrix} 0 & 0 \\ \textbf{ybx} & 0 \end{pmatrix} = \begin{pmatrix} 0 & 0 \\ \textbf{xby}-\textbf{ybx} & 0 \end{pmatrix}.
    \end{align*}

                              The proof for item ``b'' follows from the matrix multiplication of $(\textbf{xby} - \textbf{ybx})$.

    \end{proof}
    The following theorem states that any Lie algebras  $\mathfrak{g}^B_{-1}$ is  isomorphic to $\mathfrak{g}^{B'}_{-1}$, where $B'$ is as in Lemma~\ref{l2}.
    \begin{theorem}
    \label{t4}
 Let $\mathfrak{g}^{B}_{-1}$ be a Lie algebra, $B\in \mathfrak{g}_{1}$, $\rank B = \rank B' = r$ and $B'$ be as in Lemma~\ref{l2}. Then there is an isomorphism of Lie algebras:
    $${\mathfrak{g}^{B}_{-1}} \xrightarrow{\sim} {\mathfrak{g}^{B'}_{-1}}.$$
    \end{theorem}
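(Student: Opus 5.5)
By Lemma~\ref{l2} there are invertible matrices $P_m$ and $P_n$ with $P_m\mathbf b P_n=\mathbf b'$. I will write down an explicit linear bijection $\varphi:\mathfrak g_{-1}\to\mathfrak g_{-1}$ and check that it intertwines the two derived brackets, using the formula in Lemma~\ref{l3}(a). By that lemma, the $(n,m)$ block of $\llbracket X,Y\rrbracket_B$ is $\mathbf{xby}-\mathbf{ybx}$.

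The candidate is
\[
\varphi(\mathbf x)=P_n^{-1}\,\mathbf x\,P_m^{-1}.
\]
This is an $n\times m$ matrix, as it should be: $P_n$ is $n\times n$, $P_m$ is $m\times m$, and $\mathbf x$ is $n\times m$. The map is linear and bijective, with inverse $\mathbf x\mapsto P_n\mathbf x P_m$. Conceptually, $\varphi$ is conjugation by the even invertible element $\operatorname{diag}(P_m^{-1},P_n^{-1})$ of $GL_{m\vert n}$. This conjugation is an automorphism of the Lie superalgebra $\mathfrak g$ that preserves the $\mathbb Z$-grading and sends $B$ to $B'$, so it automatically carries the derived bracket by $B$ to the derived bracket by $B'$. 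I will nevertheless verify the identity directly.

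The verification is a one-line matrix computation:
\[
\varphi(\mathbf x)\,\mathbf b'\,\varphi(\mathbf y)=P_n^{-1}\mathbf x P_m^{-1}(P_m\mathbf b P_n)P_n^{-1}\mathbf y P_m^{-1}=P_n^{-1}(\mathbf{xby})P_m^{-1}.
\]
The same computation with $\mathbf x$ and $\mathbf y$ interchanged gives $\varphi(\mathbf y)\mathbf b'\varphi(\mathbf x)=P_n^{-1}(\mathbf{ybx})P_m^{-1}$. Subtracting the two identities yields
\[
\llbracket\varphi(X),\varphi(Y)\rrbracket_{B'}=\varphi\bigl(\llbracket X,Y\rrbracket_B\bigr).
\]

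The only point needing care is to match the sides correctly. Since $\mathbf b'=P_m\mathbf bP_n$, the factor $P_m$ must cancel on the right of $\mathbf x$ and $P_n$ on the left of $\mathbf y$, which forces exactly the form of $\varphi$ above. The rank hypothesis enters only through Lemma~\ref{l2}, which guarantees that $P_m$ and $P_n$ exist.
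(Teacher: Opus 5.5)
Your proof is correct and is essentially the paper's: the paper conjugates by $P=\operatorname{diag}(P_m,P_n^{-1})$, whose restriction to $\mathfrak g_{-1}$ is exactly your map $\mathbf x\mapsto P_n^{-1}\mathbf xP_m^{-1}$, and then makes the same substitution relating $\mathbf b$ and $\mathbf b'$. One small slip in your conceptual aside: conjugation by $\operatorname{diag}(P_m^{-1},P_n^{-1})$ would give $\mathbf x\mapsto P_n^{-1}\mathbf xP_m$ and $\mathbf b\mapsto P_m^{-1}\mathbf bP_n$, so the correct even element is $\operatorname{diag}(P_m,P_n^{-1})$ as in the paper, though your direct verification does not depend on this.
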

    \begin{proof} Consider the matrix $P = \left( \begin{array}{cc}
                                 P_m & 0 \\
                                 0 & {P_n}^{-1} \\
                                \end{array}
                              \right)$, where $P_m$ and $P_n$ are from Lemma~\ref{l2}. \\
    Let $\phi_P : \mathfrak{gl}_{m\vert n} \to \mathfrak{gl}_{m\vert n}, X \mapsto P X P^{-1}$ and
    $\pi: \mathfrak{gl}_{m\vert n} \to \mathfrak{g}_{-1}$ be the natural projection.
    We prove the theorem by constructing a commutative diagram of $\mathfrak{gl}_{m\vert n}$ and $\mathfrak{g}_{-1}$, using the mappings defined above.
To achieve that we need to prove that each $\mathfrak{g}_{-1}$, $\mathfrak{g}_0$ and $\mathfrak{g}_1$ are invariant by $\phi_P$.
    Let \[ T = \bordermatrix{ & m & n \cr
      m & \textbf{x} & \textbf{y} \cr
      n & \textbf{z} & \textbf{w} \cr}, \quad T \in \mathfrak{gl}_{m\vert n}. \] Then
      $$\phi_P (T) =
    \left( \begin{array}{cc}
                                 P_m & 0 \\
                                 0 & P_n^{-1} \\
                                \end{array}
                              \right) \left( \begin{array}{cc}
                                 \textbf{x} & \textbf{y} \\
                                 \textbf{z} & \textbf{w} \\
                                \end{array}
                              \right) \left( \begin{array}{cc}
                                 P_m^{-1} & 0 \\
                                 0 &  {P_n}\\
                                \end{array}
                              \right) = \left( \begin{array}{cc}
                                 {P_m}\textbf{x}P_m^{-1} & P_m\textbf{y}P_n \\
                                 P_n^{-1}\textbf{z}P_m^{-1} & P_n^{-1}\textbf{w}P_n \\
                                \end{array}
                              \right).$$\\
    Now we see that $\phi_P (\mathfrak{g}_{0}) \subseteq \mathfrak{g}_{0}$, $\phi_P (\mathfrak{g}_{1}) \subseteq \mathfrak{g}_{1}$ and $\phi_P (\mathfrak{g}_{-1}) \subseteq \mathfrak{g}_{-1}$. Therefore the following diagram is commutative:
    \[
    \begin{tikzcd}[column sep=large, row sep=large]
      \mathfrak{gl}_{m\vert n} \arrow[r, "\phi_P"] \arrow[d, "\pi"'] & \mathfrak{gl}_{m\vert n} \arrow[d, "\pi"] \\
      \mathfrak{g}_{-1} \arrow[r, "\phi_P"] & \mathfrak{g}_{-1}
    \end{tikzcd}
    \]

    It remains to prove that $\phi_P$ is a homomorphism of Lie algebras, i.e., $\phi_P (\llbracket X, Y \rrbracket_{B}) = \llbracket \phi_P (X), \phi_P (Y) \rrbracket_{B'}$.\\
    Let $X, Y \in \mathfrak{g}_{-1}$.
    By Lemma~\ref{l3}, we have
    \begin{align*}
      \phi_P(\llbracket X,Y \rrbracket _{B}) &= \begin{pmatrix} P_m & 0 \\ 0 & P_n^{-1} \end{pmatrix} \begin{pmatrix} 0 & 0 \\ \textbf{xby}-\textbf{ybx} & 0 \end{pmatrix} \begin{pmatrix} P_m^{-1} & 0 \\ 0 & P_n \end{pmatrix} \\
      &= \begin{pmatrix} 0 & 0 \\ P_{n}^{-1}(\textbf{xby} - \textbf{ybx})P_m^{-1} & 0 \end{pmatrix}.
    \end{align*}
    Writing $\textbf{b}$ as $P_{m}^{-1} \textbf{b}' P_{n}^{-1} $ we get:
    \begin{align*}
      &= \begin{pmatrix} 0 & 0 \\ (P_{n}^{-1} \textbf{x}P_m^{-1}) \textbf{b}' (P_{n}^{-1} \textbf{y} P_m^{-1}) - (P_{n}^{-1} \textbf{y} P_m^{-1}) \textbf{b}' (P_n^{-1} \textbf{x} P_m^{-1}) & 0 \end{pmatrix} \\
      &= \llbracket P_{n}^{-1}X P_m^{-1}, P_{n}^{-1} YP_m^{-1} \rrbracket _{B'} = \llbracket \phi_P (X),\phi_P (Y) \rrbracket _{B'}.
\end{align*} Therefore $\phi_P$ is an isomorphism of Lie algebras.
                                       \end{proof}

    \subsection{The Levi-Malcev Components}
    Below we obtain the center, radical and Levi subalgebra of $\mathfrak{g}^{B'}_{-1}$.
    
    \begin{remark}
      By Lemma~\ref{l2}, every matrix $B$ is equivalent to a normal form $B'$. By Theorem~\ref{t4},
the Lie algebras associated to $B$ and $B'$ are isomorphic. Therefore, replacing
$B$ by $B'$, we may assume that $B$ is in normal form. For simplicity, we continue
to denote this normal form by $B$.
    \end{remark}

   \begin{corollary}\label{c1}
Let $\mathfrak{g}_{-1}$ be of size $m \times n$, and let $B$ be of rank $r$. Then:
\begin{enumerate}[label=\alph*)]

\item If $r = m = n$, the center consists of the scalar matrices
\[
Z(\mathfrak{g}^B_{-1}) = \{\,\alpha I_r : \alpha \in \mathbb{K}\,\}.
\]

\item If $r \neq m$ and $r \neq n$, then
\[
Z(\mathfrak{g}^B_{-1})
=
\left\{
\begin{pmatrix} 0 & 0 \\ 0 & y_{22} \end{pmatrix}
\;:\; y_{22} \in M_{(n-r)\times(m-r)}(\mathbb{K})
\right\}.
\]

\item If $r = m$ or $r = n$ (but not both), then $Z(\mathfrak{g}^B_{-1}) = \{0\}$.

\end{enumerate}
\end{corollary}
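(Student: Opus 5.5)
We work directly from Lemma~\ref{l3}(b): with $B$ in normal form, a matrix $\mathbf{x}\in\mathfrak{g}_{-1}$ lies in $Z(\mathfrak{g}^B_{-1})$ if and only if $\mathbf{x}\mathbf{b}\mathbf{y}-\mathbf{y}\mathbf{b}\mathbf{x}=0$ for every $\mathbf{y}$. We write $\mathbf{x}$ and $\mathbf{y}$ in the $(r,n-r)\times(r,m-r)$ block form. The condition then splits into four block equations that must hold for all $y_{11},y_{12},y_{21},y_{22}$:
\[
x_{11}y_{11}=y_{11}x_{11},\quad x_{11}y_{12}=y_{11}x_{12},\quad x_{21}y_{11}=y_{21}x_{11},\quad x_{21}y_{12}=y_{21}x_{12}.
\]
Since the $y_{ij}$ vary independently, we will specialize them one at a time.

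First, the $(1,1)$ equation says that $x_{11}$ commutes with all of $M_r(\mathbb{K})$, so $x_{11}=\alpha I_r$. In the $(1,2)$ equation we set $y_{12}=0$; this gives $y_{11}x_{12}=0$ for all $y_{11}$, hence $x_{12}=0$. Setting instead $y_{11}=0$ gives $\alpha y_{12}=0$ for all $r\times(m-r)$ matrices $y_{12}$, which forces $\alpha=0$ whenever $m>r$. Symmetrically, in the $(2,1)$ equation, taking $y_{21}=0$ gives $x_{21}=0$, and taking $y_{11}=0$ gives $\alpha y_{21}=0$, which forces $\alpha=0$ whenever $n>r$. With $x_{12}=x_{21}=0$, the $(2,2)$ equation holds automatically, and no condition is imposed on $x_{22}$.

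Conversely, we check that every $\mathbf{x}$ with $x_{12}=x_{21}=0$, $x_{22}$ arbitrary and $x_{11}=\alpha I_r$ (with $\alpha=0$ unless $m=n=r$) satisfies all four equations. This gives the center as exactly the set of such matrices.

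It remains to read off the three cases. If $r=m=n$, the blocks $x_{12},x_{21},x_{22}$ are empty and $\alpha$ is free, so the center is $\{\alpha I_r\}$. If $r\neq m$ and $r\neq n$, then $r<m$ and $r<n$, so $\alpha=0$ and only $x_{22}$ survives, giving part (b); the degenerate case $r=0$, where the bracket is zero, is consistent with this. If exactly one of $m,n$ equals $r$, then $\alpha=0$ because the other dimension exceeds $r$, and $x_{22}$ has size $(n-r)\times(m-r)$ with one dimension zero, so it is empty and the center is $\{0\}$.

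The only delicate point is keeping track of the empty blocks in the boundary cases, which is exactly what distinguishes cases (a)--(c); the rest is routine.
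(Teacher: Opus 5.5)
Your proposal is correct and follows essentially the paper's route. Both expand the central condition blockwise via Lemma~\ref{l3}(b), force the $r\times r$ block to be scalar because it commutes with all of $M_r(\mathbb{K})$, and kill the off-diagonal blocks and $\alpha$ by specializing the free blocks. The only difference is organizational: you derive the general form once, check sufficiency explicitly, and then read off the three cases from which blocks are empty, whereas the paper treats each case separately.
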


\begin{proof}
We determine the center by solving $\llbracket X, W \rrbracket_B = 0$ for all
$X \in \mathfrak{g}^B_{-1}$. Writing $X$ and $W$ in the block decomposition
of Lemma~\ref{l3} and applying Lemma~\ref{l2}, this condition becomes
\begin{equation}\label{eq:center}
\begin{pmatrix}
x_{11}w_{11} - w_{11}x_{11} & x_{11}w_{12} - w_{11}x_{12} \\
x_{21}w_{11} - w_{21}x_{11} & x_{21}w_{12} - w_{21}x_{12}
\end{pmatrix}
= 0,
\end{equation}
which must hold for all choices of blocks $x_{11}, x_{12}, x_{21}$.

\medskip
\textbf{Case (a):} If $r = m = n$, then $W = w_{11}$ is the only block and
\eqref{eq:center} reduces to $x_{11}w_{11} = w_{11}x_{11}$ for all
$x_{11} \in M_r(\mathbb{K})$. By Schur's lemma, $w_{11} = \alpha I_r$ for some
$\alpha \in \mathbb{K}$.

\medskip
\textbf{Case (b):} If $r \neq m$ and $r \neq n$, all four blocks are present.
The $(1,1)$-entry of \eqref{eq:center} gives $x_{11}w_{11} = w_{11}x_{11}$ for
all $x_{11}$, so $w_{11} = \alpha I_r$. Substituting into the $(1,2)$-entry yields
$x_{11}w_{12} = \alpha x_{12}$ for all $x_{11}$ and $x_{12}$. Taking $x_{11} = 0$
forces $\alpha = 0$, and hence $w_{11} = 0$; the equation then reduces to
$x_{11}w_{12} = 0$ for all $x_{11}$, giving $w_{12} = 0$. The $(2,1)$-entry
similarly gives $w_{21} = 0$. Therefore
\[
W = \begin{pmatrix} 0 & 0 \\ 0 & w_{22} \end{pmatrix},
\]
with $w_{22}$ free, which is precisely the set described in item (b).

\medskip
\textbf{Case (c):} Suppose $r = m$ (the case $r = n$ is symmetric). The
right block column of \eqref{eq:center} (corresponding to $m-r = 0$) does not exist, so the system reduces to the two conditions
\[
x_{11}w_{11} = w_{11}x_{11} \quad \text{and} \quad x_{21}w_{11} = w_{21}x_{11}.
\]
The first condition gives $w_{11} = \alpha I_r$ for some $\alpha \in \mathbb{K}$. Substituting into the second yields $\alpha x_{21} = w_{21}x_{11}$. Taking $x_{11} = 0$ forces $\alpha x_{21} = 0$ for all $x_{21}$, which implies $\alpha = 0$ and thus $w_{11} = 0$. The condition then becomes $w_{21}x_{11} = 0$ for all $x_{11}$, which gives $w_{21} = 0$.
If instead $r = n$, an analogous argument yields $w_{11} = 0$ and $w_{12} = 0$.
In either sub-case $W = 0$, so $Z(\mathfrak{g}^B_{-1}) = \{0\}$.
\end{proof}

    \begin{corollary}
    \label{c2}
     Consider the Lie algebra ${\mathfrak{g}^{B}_{-1}}$ and let rank $B = r$. 
     Let \[ R = \left\{ \bordermatrix{ & r & m-r \cr
       r & \alpha I_r & X_{12} \cr
      n-r & X_{21} & X_{22} \cr} \mathrel{\Bigg\vert} \alpha \in \mathbb{K} \right\} \]
    and
    \[ S = \left\{ \bordermatrix{ & r & m-r \cr
       r & X_{11} & 0 \cr
      n-r & 0 & 0 \cr} \mathrel{\Bigg\vert} X_{11} \text{ is an } r \times r \text{ matrix with } \operatorname{tr}(X_{11}) = 0 \right\}. \]
    Then\\
    a) $R$ is solvable;\\
    b) $S$ is semisimple;\\
    c) $R = \mathfrak{r}(\mathfrak{g}^{B}_{-1})$ and $S = \mathfrak{s}(\mathfrak{g}^{B}_{-1})$, forming the Levi-Malcev decomposition $\mathfrak{g}^{B}_{-1} = S \oplus R$.
    \end{corollary}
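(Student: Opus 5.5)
Throughout, $B$ is in normal form and the bracket is given by Lemma~\ref{l3}(b). I would prove (a), (b), (c) in that order, with (c) reduced to showing that $R$ is an ideal and that $\mathfrak{g}^B_{-1}/R \simeq \mathfrak{sl}(r)$. Lemma~\ref{l1}(b) then identifies $R$ with the radical.

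\textbf{Step 1: $R$ is an ideal.} Take $X\in\mathfrak{g}^B_{-1}$ arbitrary and $W\in R$, so $w_{11}=\alpha I_r$. By Lemma~\ref{l3}(b), the $(1,1)$ block of $\llbracket X,W\rrbracket$ is $x_{11}\alpha I_r-\alpha I_r x_{11}=0$. Hence $\llbracket X,W\rrbracket\in R$, so $R$ is an ideal. In particular $R$ is a subalgebra.

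\textbf{Step 2: $R$ is solvable.} For $X,Y\in R$ with $x_{11}=\alpha I$ and $y_{11}=\beta I$, the bracket is
\[
\begin{pmatrix} 0 & \alpha y_{12}-\beta x_{12}\\ \beta x_{21}-\alpha y_{21} & x_{21}y_{12}-y_{21}x_{12}\end{pmatrix}.
\]
So $R^{(1)}$ lies in $N=\{w_{11}=0\}\cap R$. For $X,Y\in N$ the bracket has only a $(2,2)$ block, namely $x_{21}y_{12}-y_{21}x_{12}$. So $R^{(2)}$ lies in the span $Q$ of matrices supported in the $(2,2)$ block. Matrices in $Q$ have $x_{11}=x_{12}=x_{21}=0$, so any bracket of two of them is $0$ and $R^{(3)}=0$. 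This gives (a).

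\textbf{Step 3: $S$ is semisimple.} For $X,Y\in S$ the bracket is $\bigl(\begin{smallmatrix}[x_{11},y_{11}]&0\\0&0\end{smallmatrix}\bigr)$. Hence $x_{11}\mapsto X$ is an isomorphism $\mathfrak{sl}(r)\to S$. Since $\mathfrak{sl}(r)$ is semisimple in characteristic zero for $r\ge 2$, this gives (b). The degenerate cases $r\le 1$ should be noted separately: there $S=0$ and $\mathfrak{g}^B_{-1}=R$ is solvable.

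\textbf{Step 4: decomposition and identification.} As vector spaces $\mathfrak{g}^B_{-1}=S\oplus R$, because $x_{11}=(x_{11}-\tfrac{\operatorname{tr}x_{11}}{r}I)+\tfrac{\operatorname{tr}x_{11}}{r}I$ and the sum is direct. The quotient map $\mathfrak{g}^B_{-1}\to\mathfrak{g}^B_{-1}/R$ restricts to an isomorphism on $S$, so the quotient is semisimple. Lemma~\ref{l1}(b) then gives $R=\mathfrak{r}$. Since $S$ is a semisimple subalgebra complementary to $\mathfrak{r}$, Theorem~\ref{t1} shows $S$ is a Levi subalgebra, unique up to automorphism.

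The main obstacle is the bookkeeping in Step 2. One must check the bracket formula carefully, including when some blocks are empty ($r=m$ or $r=n$), to make sure the derived series really terminates.
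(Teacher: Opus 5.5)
Your proof is correct and follows essentially the same route as the paper: you compute the derived series $R\supseteq R^{(1)}\supseteq R^{(2)}\supseteq R^{(3)}=0$ from Lemma~\ref{l3}(b), identify $S\simeq\mathfrak{sl}(r)$, and apply Lemma~\ref{l1}(b) to the solvable ideal $R$. The differences only add rigor: your Step~1 verifies that $R$ is an ideal and your Step~4 justifies $S\simeq\mathfrak{g}^B_{-1}/R$, both of which the paper merely asserts, and you show $\llbracket Q,Q\rrbracket_B=0$ directly instead of citing $R^{(2)}\subseteq Z(\mathfrak{g}^B_{-1})$ via Corollary~\ref{c1}.
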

    \begin{proof} 
    Let $R^{(1)} = \llbracket R, R \rrbracket_{B}$, $R^{(2)} = \llbracket R^{(1)}, R^{(1)} \rrbracket_{B}$, and similarly define $R^{(n)}$ for $n \in \mathbb{N}$ as the derived series of $R$ with respect to the bracket $\llbracket \cdot, \cdot \rrbracket_{B}$.\\
    $a$) Let $X, Y \in R$ with blocks $\alpha_1 I_r, X_{12}, X_{21}, X_{22}$ and $\alpha_2 I_r, Y_{12}, Y_{21}, Y_{22}$, respectively. Then, by Lemma~\ref{l3} we have
    \begin{align*}
    \llbracket X, Y \rrbracket_{B} &= \bordermatrix{ & r & m-r \cr
     r & (\alpha_1 \alpha_2 -\alpha_2 \alpha_1)I_r & \alpha_1 Y_{12}-\alpha_2 X_{12} \cr
     n-r & \alpha_2 X_{21} - \alpha_1 Y_{21} & X_{21} Y_{12}-Y_{21} X_{12} \cr} \\
     &= \bordermatrix{ & r & m-r \cr
     r & 0 & \alpha_1 Y_{12}-\alpha_2 X_{12} \cr
     n-r & \alpha_2 X_{21} - \alpha_1 Y_{21} & X_{21} Y_{12}-Y_{21} X_{12} \cr}.
    \end{align*}
    Let $X', Y' \in R^{(1)}$. Since their upper-left $r \times r$ blocks are zero ($\alpha'_1 = \alpha'_2 = 0$), we have
    \[ \llbracket X', Y' \rrbracket_{B} = \bordermatrix{ & r & m-r \cr
     r & 0 & 0 \cr
     n-r & 0 & X'_{21} Y'_{12}-Y'_{21} X'_{12} \cr}. \]
               
    Observe that $R^{(2)} \subseteq Z(\mathfrak{g}^{B}_{-1})$, therefore $R^{(3)} = \{0\}$ and $R$ is solvable.\\
    $b$) Observe that $S \simeq \mathfrak{sl}(r)$, therefore $S$ is semisimple, and simple for $r \ge 2$. (Note that when $r=1$, $\mathfrak{sl}(1) = \{0\}$ is the trivial algebra, and the Levi-Malcev decomposition becomes degenerate).\\
    $c$) As a vector space, $\mathfrak{g}^{B}_{-1} = S \oplus R$ with $S \cap R = \{0\}$. Since $S \simeq \mathfrak{g}^{B}_{-1}/R$ is semisimple and $R$ is a solvable ideal, it follows from Lemma~\ref{l1} b) that $R = \mathfrak{r}(\mathfrak{g}^{B}_{-1})$. Consequently, $S = \mathfrak{s}(\mathfrak{g}^{B}_{-1})$.
    \end{proof}
    \subsection{Isomorphism Classification for Fixed Dimensions}
    The following theorem is our first result: we can classify up to isomorphism all Lie algebras ${\mathfrak{g}^{B}_{-1}}$ using the possible integer values $r_i$ for rank $B$.

    \begin{theorem}
    \label{t5}
    Assume $(m, n) \neq (1, 1)$. Consider the Lie algebras ${\mathfrak{g}^{B_i}_{-1}}$ of size $m \times n$ and let $r_i, r_j \in \{0,\dots,\min\{m,n\}\}$ be the ranks of $B_i$ and $B_j$ respectively. Then we have
    $$\mathfrak{g}^{B_i}_{-1} \simeq \mathfrak{g}^{B_j}_{-1} \Leftrightarrow r_i = r_j.$$
    \end{theorem}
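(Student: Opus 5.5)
The forward implication is immediate from the reduction to normal form. If $r_i=r_j=r$, then Lemma~\ref{l2} gives both $B_i$ and $B_j$ the same normal form $B'$ (the block matrix with $\mathbf b'=\bigl(\begin{smallmatrix} I_r&0\\0&0\end{smallmatrix}\bigr)$). Theorem~\ref{t4} then gives isomorphisms $\mathfrak g^{B_i}_{-1}\simeq\mathfrak g^{B'}_{-1}\simeq\mathfrak g^{B_j}_{-1}$. So the real content is the converse: non-isomorphic algebras must arise from different ranks. I will prove the contrapositive by exhibiting isomorphism invariants that separate the possible ranks $r\in\{0,\dots,\min\{m,n\}\}$.

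The main invariant is the Levi factor. By Corollary~\ref{c2}, $\mathfrak g^{B}_{-1}=S\oplus R$ with $S\simeq\mathfrak{sl}(r)$ a Levi subalgebra. By Theorem~\ref{t1} the Levi factor is unique up to isomorphism (indeed $S\simeq \mathfrak g^B_{-1}/\mathfrak r$), and an isomorphism of Lie algebras carries radical to radical. Hence $\dim S=r^2-1$ (for $r\ge1$, and $0$ for $r=0$) is an isomorphism invariant. Since $r\mapsto r^2-1$ is injective on $r\ge 1$, this separates all ranks $r\ge 2$ from each other and from $r\in\{0,1\}$.

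The remaining obstacle is distinguishing $r=0$ from $r=1$, since both have trivial Levi factor. This is exactly where the hypothesis $(m,n)\neq(1,1)$ enters.
\begin{itemize}
\item For $r=0$ we have $B=0$, so the derived bracket vanishes identically and $\mathfrak g^{B}_{-1}$ is abelian.
\item For $r=1$, I will show the algebra is not abelian using the explicit formula of Lemma~\ref{l3}(b). If $m\ge2$, take $X$ with $x_{11}=1$ and all other blocks zero, and $Y$ with $y_{12}\neq0$ and all other blocks zero. Then the $(1,2)$ block of $\llbracket X,Y\rrbracket_B$ is $x_{11}y_{12}-y_{11}x_{12}=y_{12}\neq0$. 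If instead $n\ge2$, use the $(2,1)$ block $x_{21}y_{11}-y_{21}x_{11}$ with $x_{21}\neq0$, $y_{11}=1$ and all other blocks zero.
\end{itemize}
Since $(m,n)\neq(1,1)$ and $r=1\le\min\{m,n\}$, one of $m\ge2$ or $n\ge2$ holds, so $\mathfrak g^B_{-1}$ is nonabelian when $r=1$. Thus abelianness distinguishes $r=0$ from $r=1$, which completes the separation of all ranks.

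I expect this $r\in\{0,1\}$ case to be the only delicate point, since it is where the Levi factor degenerates. It also explains the excluded case: for $m=n=1$, the algebras for $r=0$ and $r=1$ are both the one-dimensional abelian algebra.
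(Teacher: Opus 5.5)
Your proof is correct and follows essentially the same route as the paper. Equal ranks give isomorphic algebras via the common normal form (Lemma~\ref{l2} and Theorem~\ref{t4}), and different ranks are separated by the dimension $r^2-1$ of the Levi factor $\mathfrak{sl}(r)$ together with abelianness for rank $0$; the paper uses abelianness to split $r=0$ from all positive ranks, whereas you use it only to split $r=0$ from $r=1$, with explicit witnesses from Lemma~\ref{l3}(b).

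One small wording slip: the converse you need is that isomorphic algebras have equal rank, not that ``non-isomorphic algebras must arise from different ranks.'' The argument you actually give (different ranks $\Rightarrow$ non-isomorphic) proves the correct direction.
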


    \begin{remark}
    When $m = n = 1$, both possible ranks $r = 0$ and $r = 1$ yield the unique one-dimensional abelian Lie algebra, so the classification is trivial in this case.
    \end{remark}

    \begin{proof} Consider ${\mathfrak{g}^{B_1}_{-1}}$ and ${\mathfrak{g}^{B_2}_{-1}}$, where $\rank B_1 = r_1$, $\rank B_2 = r_2$ and $r_1 \ne r_2$. Let us prove that these algebras are not isomorphic. 
    
    First, consider the case where one of the ranks is zero, say $r_1 = 0$. Then $B_1 = 0$ and the derived bracket is identically zero, making $\mathfrak{g}^{B_1}_{-1}$ abelian. Since $(m,n)\neq(1,1)$, we have $\dim\mathfrak{g}_{-1} = mn > 1$, so for $r_2 > 0$ the algebra $\mathfrak{g}^{B_2}_{-1}$ is non-abelian (by Lemma~\ref{l3}, one can find $X, Y \in \mathfrak{g}_{-1}$ with $\llbracket X,Y\rrbracket_{B_2} \neq 0$) and hence not isomorphic to $\mathfrak{g}^{B_1}_{-1}$.
    
    Thus, we may assume $r_1, r_2 \ge 1$. Theorem~\ref{t1} implies that there are the following Levi decompositions 
\[{\mathfrak{g}^{B_1}_{-1}} = \mathfrak{s}(\mathfrak{g}^{B_1}_{-1}) \oplus \mathfrak{r}(\mathfrak{g}^{B_1}_{-1}), \quad \mathfrak{g}^{B_2}_{-1} = \mathfrak{s}(\mathfrak{g}^{B_2}_{-1}) \oplus \mathfrak{r}(\mathfrak{g}^{B_2}_{-1}). \]
    From the fact that $\mathfrak{s}({\mathfrak{g}^{B_i}_{-1}}) \simeq \mathfrak{sl}(r_i)$, $\dim \mathfrak{sl}(r_i)$ = $r_i^2 - 1$ and $ r_1 \ne r_2$, we conclude that $\dim \mathfrak{s}(\mathfrak{g}^{B_1}_{-1}) \ne \dim \mathfrak{s}(\mathfrak{g}^{B_2}_{-1})$, therefore $\mathfrak{s}(\mathfrak{g}^{B_1}_{-1}) \not\simeq \mathfrak{s}(\mathfrak{g}^{B_2}_{-1})$. From Theorem~\ref{t1}, we know that the Levi subalgebra is unique up to an automorphism, hence the Lie algebras $\mathfrak{g}^{B_1}_{-1}$ and $\mathfrak{g}^{B_2}_{-1}$ cannot be isomorphic. This fact together with Theorem~\ref{t4} implies that $\mathfrak{g}^{B_1}_{-1} \simeq \mathfrak{g}^{B_2}_{-1}$ if and only if $r_1 = r_2$. Therefore, each Lie algebra ${\mathfrak{g}^{B_{i}}_{-1}}$ is uniquely associated with each parameter $r_i$.
    \end{proof}
    \section{Isomorphism Classification for Arbitrary Dimensions}\label{sec4}

    Now we consider the classification across arbitrary dimensions. Let $\mathfrak{gl}_{p\vert q} = \mathfrak{h}_{-1} \oplus \mathfrak{h}_{0} \oplus \mathfrak{h}_{1}$, and let $H \in \mathfrak{h}_{1}$ be of rank $r_2$, which we assume is in normal form just as we did for $B$. As before, denote the Lie algebra $(\mathfrak{h}_{-1}, \llbracket\cdot, \cdot\rrbracket_{H})$ as $\mathfrak{h}^{H}_{-1}$.
    
    In the remainder of this section, we establish the conditions under which the Lie algebras $\mathfrak{g}^{B}_{-1}$ and $\mathfrak{h}^{H}_{-1}$ are isomorphic. Specifically, letting $r_1$ be the rank of $B$ and $r_2$ be the rank of $H$, we will show that these algebras are isomorphic if and only if they share the same rank and their dimension parameters agree up to permutation:
    \[
    \mathfrak{g}^{B}_{-1} \simeq \mathfrak{h}^{H}_{-1} \iff r_1 = r_2 \text{ and } \{m,n\} = \{p,q\}.
    \]
    
    \subsection{The Supertranspose Isomorphism}
    In order to achieve this generalization we construct an isomorphism between $\mathfrak{gl}_{m\vert n}$ and $\mathfrak{gl}_{n\vert m}$ and then construct a commutative diagram between these algebras.

    \begin{proposition}
    \label{p6}
     There exists an isomorphism between the Lie superalgebras $\mathfrak{gl}_{m\vert n}$ and $\mathfrak{gl}_{n\vert m}$.
    \end{proposition}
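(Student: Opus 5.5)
I would build the isomorphism explicitly from the negative supertranspose. For $X=\left(\begin{smallmatrix}\mathbf a&\mathbf b\\ \mathbf c&\mathbf d\end{smallmatrix}\right)\in\mathfrak{gl}_{m\vert n}$, set
\[
\psi(X)=-X^{st},\qquad X^{st}=\begin{pmatrix}\mathbf a^{t}&\mathbf c^{t}\\ -\mathbf b^{t}&\mathbf d^{t}\end{pmatrix},
\]
and then conjugate by the block permutation matrix $J=\left(\begin{smallmatrix}0&I_n\\ I_m&0\end{smallmatrix}\right)$ so that the result lies in $\mathfrak{gl}_{n\vert m}$. Concretely, define
\[
\Phi(X)=-\begin{pmatrix}\mathbf d^{t}&-\mathbf b^{t}\\ \mathbf c^{t}&\mathbf a^{t}\end{pmatrix}\in\mathfrak{gl}_{n\vert m}.
\]
The even part of $X$, namely $\mathbf a,\mathbf d$, goes to the diagonal blocks of size $n\times n$ and $m\times m$, and the odd part goes to the odd part. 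Hence $\Phi$ preserves the $\mathbb{Z}_2$-grading. It is clearly linear and bijective, with inverse given by the analogous formula from $\mathfrak{gl}_{n\vert m}$ to $\mathfrak{gl}_{m\vert n}$. I would also record that $\Phi$ sends $\mathfrak g_{-1}$ to $\mathfrak h_{-1}$ and $\mathfrak g_1$ to $\mathfrak h_1$, with $\mathbf c\mapsto-\mathbf c^t$ and $\mathbf b\mapsto\mathbf b^t$. This is what will be needed later for the commutative diagram.

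The main step is to show $\Phi([X,Y])=[\Phi(X),\Phi(Y)]$. I would reduce it to the standard identity for the supertranspose,
\[
(XY)^{st}=(-1)^{\deg X\deg Y}\,Y^{st}X^{st}
\]
for homogeneous $X,Y$. From this identity,
\[
-[X,Y]^{st}=-(XY)^{st}+(-1)^{\deg X\deg Y}(YX)^{st}=-(-1)^{\deg X\deg Y}Y^{st}X^{st}+X^{st}Y^{st}=[-X^{st},-Y^{st}],
\]
so $X\mapsto -X^{st}$ is a superalgebra automorphism of $\mathfrak{gl}_{m\vert n}$ viewed as an abstract superalgebra. Conjugation by $J$ is an associative algebra isomorphism of the matrix algebras. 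It carries the block decomposition of type $(m,n)$ to type $(n,m)$ and preserves parity of blocks, so it intertwines supercommutators. Therefore $\Phi$, being the composite of the two, is a Lie superalgebra isomorphism.

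The obstacle I expect is the sign bookkeeping in the supertranspose identity. I would verify it blockwise by separating the three parity cases (even–even, even–odd, odd–odd). The odd–odd case is where the minus sign on $\mathbf b^t$ matters: the product of two odd matrices is even with blocks $\mathbf b\mathbf c'$ and $\mathbf c\mathbf b'$, and the extra sign $-1$ must come out correctly. As a fallback, I would check the bracket directly on homogeneous blocks for the three parity cases, which is a finite matrix computation.
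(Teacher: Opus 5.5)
Your proof is correct, but the map is not the paper's. The paper's proof is only your second step. It takes $\Phi_Q(W)=QWQ^{-1}$ with $Q$ equal to your $J$, so that $\Phi_Q(X)=\left(\begin{smallmatrix}\mathbf d&\mathbf c\\ \mathbf b&\mathbf a\end{smallmatrix}\right)$. This is already a Lie superalgebra isomorphism, for exactly the reason you give: an associative isomorphism that sends even blocks to even blocks and odd blocks to odd blocks.

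For the proposition as stated, the factor $X\mapsto -X^{st}$ and the sign check of $(XY)^{st}=(-1)^{\deg X\deg Y}Y^{st}X^{st}$ are therefore unnecessary. The extra step is sound all the same: that identity does hold with the paper's parity-independent convention for $X^{st}$, and the odd--odd case produces the sign $-1$.

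What the extra factor buys is compatibility with the $\mathbb{Z}$-grading. $\Phi_Q$ moves the block $\mathbf c$ to the upper right corner of $\mathfrak{gl}_{n\vert m}$, i.e.\ it sends $\mathfrak g_{-1}$ onto $\mathfrak h_{1}$. So the paper's parenthetical claim $\Phi_Q(\mathfrak g_i)\subseteq\mathfrak h_i$ holds only for the $\mathbb{Z}_2$-grading. This is why the paper composes with $\mathfrak t$ in Theorem~\ref{t7}. There $\mathfrak t$ reverses brackets, since $[X,Y]^{st}=-[X^{st},Y^{st}]$, and the two signs cancel inside $[X,[B,Y]]$.

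Your $\Phi$ agrees with $\mathfrak t\circ\Phi_Q$ on $\mathfrak g_{-1}\oplus\mathfrak g_{1}$, but it is a genuine $\mathbb{Z}$-graded isomorphism. Theorem~\ref{t7} then follows immediately from $\Phi([X,[B,Y]])=[\Phi(X),[\Phi(B),\Phi(Y)]]$, where $\Phi(B)$ has odd block $\mathbf b^{t}$.
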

    \begin{proof}
    Let $X = \bordermatrix{ & m & n \cr
      m & A & B \cr
      n & C & D \cr}$, $Y = \bordermatrix{ & n & m \cr
      n & D & C \cr
      m & B & A \cr}$, $Q = \bordermatrix{ & m & n \cr
      n & 0 & I_n \cr
      m & I_m & 0 \cr}$ and $Q^{-1} = \bordermatrix{ & n & m \cr
      m & 0 & I_m \cr
      n & I_n & 0 \cr}$. Let $\Phi_Q : \mathfrak{gl}_{m\vert n} \to \mathfrak{gl}_{n\vert m}$, such that  $\Phi_Q(W) =  Q W Q^{-1}$. 
       Note that the matrix multiplication $\Phi_{Q}(W) = Q W Q^{-1}$ is well-defined, as the block partitions of $Q$, $W$, and $Q^{-1}$ are conformal by construction.
    
    Then
    \begin{align*}
    \Phi_Q (X) &= \bordermatrix{ & m & n \cr n & 0 & I_n \cr m & I_m & 0 \cr} \bordermatrix{ & m & n \cr m & A & B \cr n & C & D \cr} \bordermatrix{ & n & m \cr m & 0 & I_m \cr n & I_n & 0 \cr} \\
    &= \bordermatrix{ & m & n \cr n & C & D \cr m & A & B \cr} \bordermatrix{ & n & m \cr m & 0 & I_m \cr n & I_n & 0 \cr} \\
    &= \bordermatrix{ & n & m \cr n & D & C \cr m & B & A \cr} = Y.
    \end{align*}
    
    Note that $\Phi_Q$ preserves parity (i.e., $\Phi_Q(\mathfrak{g}_i) \subseteq \mathfrak{h}_i$ for $i=-1,0,1$), $[\Phi_Q(X), \Phi_Q(Y)] = \Phi_Q[X,Y]$ and $\Phi_Q$ is a bijection. Therefore, $\Phi_Q$ is an isomorphism.
    \end{proof}
    \begin{definition}
    Consider $W = \bordermatrix{ & m & n \cr
      m & A & B \cr
      n & C & D \cr}$. The supertranspose of $W$ denoted by $W^{st}$ has the following form:
    \[ W^{st} = \bordermatrix{ & m & n \cr
      m & A^t & C^t \cr
      n & -B^t & D^t \cr}. \]
    
    \end{definition}
    The following lemma establishes key identities for the supertranspose that will be used in the proof of the next theorem.
    \begin{lemma}
    \label{l4}
    Let $\pi$ be the projection to $\mathfrak{g}_{-1}$, $\mathfrak{t}$ be the supertranspose transformation such that $\mathfrak{t}(X) = X^{st}$ and $\Phi_Q$ be as in Proposition~\ref{p6}. \\
    Consider $X = \bordermatrix{ & m & n \cr
      m & A & B \cr
      n & C & D \cr}$ and $Y = \bordermatrix{ & n & m \cr
      n & D & C \cr
      m & B & A \cr}$. Then \[ \mathfrak{t} \circ \Phi_Q (X) = \bordermatrix{ & n & m \cr
      n & D^t & B^t \cr
      m & -C^t & A^t \cr}, \quad \pi \circ \mathfrak{t}(Y) = \bordermatrix{ & n & m \cr
      n & 0 & 0 \cr
      m & -C^t & 0 \cr}. \]
    
    \end{lemma}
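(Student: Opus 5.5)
This is a direct computation, done in two steps: first compute $\Phi_Q(X)$ using Proposition~\ref{p6}, then apply the supertranspose with respect to the $(n\vert m)$ block structure.

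\textbf{First identity.} By the computation in the proof of Proposition~\ref{p6}, $\Phi_Q(X)=QXQ^{-1}=Y$, where
\[
Y=\bigl(\begin{smallmatrix} D & C\\ B & A\end{smallmatrix}\bigr)\in\mathfrak{gl}_{n\vert m}
\]
has diagonal blocks of sizes $n\times n$ and $m\times m$. The one point needing care is that $\mathfrak{t}$ is now taken in $\mathfrak{gl}_{n\vert m}$, so the roles of the blocks are reassigned. The upper-left block is $D$, the upper-right block is $C$ (of size $n\times m$), the lower-left block is $B$ (of size $m\times n$), and the lower-right block is $A$. The definition of the supertranspose transposes the diagonal blocks, moves the transpose of the lower-left block to the upper-right position, and places minus the transpose of the upper-right block in the lower-left position. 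Applying this gives
\[
\mathfrak{t}(Y)=\bigl(\begin{smallmatrix} D^t & B^t\\ -C^t & A^t\end{smallmatrix}\bigr).
\]
At this point I check the sizes: $B^t$ is $n\times m$ and $C^t$ is $m\times n$, which is consistent with the $(n\vert m)$ partition. This yields $\mathfrak{t}\circ\Phi_Q(X)$ as claimed.

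\textbf{Second identity.} The projection $\pi$ onto $\mathfrak{g}_{-1}$, here meaning $\mathfrak{h}_{-1}$ for $\mathfrak{gl}_{n\vert m}$, keeps only the lower-left block, which has size $m\times n$, and sets the other three blocks to zero. Applied to $\mathfrak{t}(Y)$ computed above, it leaves the single block $-C^t$ in the lower-left position, which is exactly the stated formula for $\pi\circ\mathfrak{t}(Y)$.

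\textbf{Main obstacle.} The only real difficulty is bookkeeping. The supertranspose must be applied with respect to the partition of $\mathfrak{gl}_{n\vert m}$, not that of $\mathfrak{gl}_{m\vert n}$, and the sign has to be placed on the transposed upper-right block $C$ rather than on $B$. I will settle this by writing $\mathfrak{t}(Y)$ entrywise against the definition, with the block sizes spelled out.
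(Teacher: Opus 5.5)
Your proposal is correct and follows the paper's proof: use Proposition~\ref{p6} to get $\Phi_Q(X)=Y$, apply the supertranspose relative to the $(n\vert m)$ partition to obtain $\bigl(\begin{smallmatrix} D^t & B^t\\ -C^t & A^t\end{smallmatrix}\bigr)$, then project onto the lower-left block. Your explicit block-size check and your care about which block receives the minus sign are points the paper leaves implicit.
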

    
    \begin{proof}
    From Proposition~\ref{p6}, we have $\Phi_Q(X) = Y$. Applying the supertranspose $\mathfrak{t}$ to $Y \in \mathfrak{gl}_{n\vert m}$, we transpose the diagonal blocks and supertranspose the off-diagonal blocks according to the definition, which yields:
    \[ \mathfrak{t} \circ \Phi_Q(X) = Y^{st} = \bordermatrix{ & n & m \cr
      n & D^t & B^t \cr
      m & -C^t & A^t \cr}. \]
    This establishes the first identity. For the second identity, the natural projection $\pi$ onto the $-1$ graded component isolates the lower-left block. Applying $\pi$ to $\mathfrak{t}(Y)$ therefore gives:
    \[ \pi \circ \mathfrak{t}(Y) = \bordermatrix{ & n & m \cr
      n & 0 & 0 \cr
      m & -C^t & 0 \cr}. \]
    \end{proof}

    The next lemma exhibits the formula of the dimension of some structures of our algebras.
    \begin{lemma}
      \label{l5}
Consider a family of Lie algebras $\mathfrak{g}^{B_{r}}_{-1}$ of size $m \times n$ and let $\rank B_r = r \neq 0$.
    Then\\
    $a$) $\dim \mathfrak{s}({\mathfrak{g}^{B_{r}}_{-1}}) = \dim \mathfrak{sl}(r) = r^2 -1$; \\
    $b$) $\dim \mathfrak{r}({\mathfrak{g}^{B_{r}}_{-1}}) = \dim {{\mathfrak{g}^{B_{r}}_{-1}}} - \dim \mathfrak{s}({\mathfrak{g}^{B_{r}}_{-1}}) = mn - r^2 +1$;\\
    $c$) $\dim Z(\mathfrak{g}^{B_{r}}_{-1}) = 
    \begin{cases} 
    1, & \text{if } r=m=n \\ 
    (m-r)(n-r), & \text{otherwise.} 
    \end{cases}$
    \end{lemma}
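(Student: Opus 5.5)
The plan is to read off each dimension from the explicit descriptions in Corollary~\ref{c1} and Corollary~\ref{c2}. By Theorem~\ref{t4} and the Remark preceding Corollary~\ref{c1}, we may assume $B_r$ is in normal form, so $\mathfrak{g}^{B_r}_{-1}$ is the space of $n\times m$ matrices $\mathbf{x}$ with the block decomposition $(r,n-r)\times(r,m-r)$. Its total dimension is $mn$.

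For part a), Corollary~\ref{c2} identifies $\mathfrak{s}(\mathfrak{g}^{B_r}_{-1})$ with the set $S$ of matrices whose only nonzero block is a traceless $x_{11}$. This set is isomorphic to $\mathfrak{sl}(r)$, so its dimension is $r^2-1$. This is independent of the chosen Levi subalgebra, since by Theorem~\ref{t1} any two Levi subalgebras are conjugate under an automorphism, and in any case each is isomorphic to $\mathfrak{g}^{B_r}_{-1}/\mathfrak{r}$.

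For part b), the decomposition $\mathfrak{g}^{B_r}_{-1}=S\oplus R$ of Corollary~\ref{c2}(c) is a vector space direct sum. Hence $\dim\mathfrak{r}=mn-(r^2-1)=mn-r^2+1$. As a check, directly from the description of $R$ one gets $1+r(m-r)+(n-r)r+(n-r)(m-r)=1+mn-r^2$, which agrees.

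For part c), I split according to the cases of Corollary~\ref{c1}. If $r=m=n$, the center is the scalar matrices $\alpha I_r$, so it has dimension $1$. If $r\neq m$ and $r\neq n$, the center consists of the matrices whose only possibly nonzero block is $y_{22}$, which has size $(n-r)\times(m-r)$; its dimension is $(m-r)(n-r)$. If exactly one of $r=m$, $r=n$ holds, the center is $\{0\}$, and this agrees with $(m-r)(n-r)=0$. So the ``otherwise'' formula covers both remaining cases.

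The only delicate point is making sure the case analysis of Corollary~\ref{c1} matches the two-line formula. The degenerate case $r=1$, where $\mathfrak{sl}(1)=0$, fits the formulas without change. Since $r\neq0$ is assumed, the abelian case $B=0$ (where the center would be everything) is excluded.
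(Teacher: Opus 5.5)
Your proposal is correct and follows the paper's own argument: both read off the dimensions directly from the explicit descriptions of the center, radical, and Levi factor in Corollaries~\ref{c1} and~\ref{c2}. You also remark that the case where exactly one of $r=m$, $r=n$ holds (center $\{0\}$) is covered by the formula $(m-r)(n-r)=0$; this fills in a point the paper's proof passes over, since it lists only two possibilities for the center.
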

    
    \begin{proof}

    From Corollary~\ref{c2} the radical and the Levi subalgebra of these algebras have the following form:
    \begin{align*}
\mathfrak{r}\!\left(\mathfrak{g}^{B_r}_{-1}\right)
&= \left\{
\bordermatrix{ & r & m-r \cr r & \alpha I_r & y_{12} \cr n-r & y_{21} & y_{22} \cr}
\;\middle|\;
\alpha \in \mathbb{K},\; y_{12}, y_{21}, y_{22} \text{ arbitrary}
\right\}, \\[6pt]
\mathfrak{s}\!\left(\mathfrak{g}^{B_r}_{-1}\right)
&= \left\{
\bordermatrix{ & r & m-r \cr r & x_{11} & 0 \cr n-r & 0 & 0 \cr}
\;\middle|\;
x_{11} \in M_r(\mathbb{K}),\; \operatorname{tr}(x_{11}) = 0
\right\}.
\end{align*}
    
    From Corollary~\ref{c1} we have two possibilities: if $r=m=n$, the center of these algebras consists of the scalar matrices (which have dimension 1); otherwise we have that:
    \[ Z(\mathfrak{g}^{B_{r}}_{-1}) = \left\{ \bordermatrix{ & r & m-r \cr
       r & 0 & 0 \cr
      n-r & 0 & y_{22} \cr} \;\middle|\; y_{22} \in M_{(n-r)\times(m-r)}(\mathbb{K}) \right\}. \]
    Examining the structure above, the result follows from a direct calculation.\\
    \end{proof}
    
    \begin{theorem}
    \label{t7}
    Consider the Lie algebra $\mathfrak{g}^{B}_{-1}$ obtained from $\mathfrak{gl}_{m\vert n}$ and the Lie algebra $\mathfrak{h}^{H}_{-1}$ obtained from $\mathfrak{gl}_{n\vert m}$. If $\rank B = \rank H$, then
    $$ \mathfrak{g}^{B}_{-1} \simeq \mathfrak{h}^{H}_{-1}.$$
    \end{theorem}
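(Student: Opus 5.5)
By Theorem~\ref{t4} it suffices to treat normal forms: we may assume $B$ has upper-right block $\mathbf b=\bigl(\begin{smallmatrix} I_r&0\\0&0\end{smallmatrix}\bigr)$ of size $m\times n$, and $H$ has upper-right block $\mathbf h$ of size $n\times m$ in the same normal form, with $r=\rank B=\rank H$. The key observation is that $\mathbf h=\mathbf b^t$. Indeed, the transpose of the $m\times n$ normal form of rank $r$ is exactly the $n\times m$ normal form of rank $r$.

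Elements of $\mathfrak g_{-1}$ are $n\times m$ matrices $\mathbf x$, and elements of $\mathfrak h_{-1}$ are $m\times n$ matrices. Guided by Lemma~\ref{l4}, where the composite $\pi\circ\mathfrak t\circ\Phi_Q$ sends the $(-1)$-block $C$ to $-C^t$, I define
\[
\psi:\mathfrak g^{B}_{-1}\to\mathfrak h^{H}_{-1},\qquad \psi(\mathbf x)=-\mathbf x^t .
\]
This is a linear bijection, since transposition is invertible.

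It remains to check that $\psi$ preserves brackets. By Lemma~\ref{l3}(a), applied in both $\mathfrak{gl}_{m\vert n}$ and $\mathfrak{gl}_{n\vert m}$, the bracket in $\mathfrak g^B_{-1}$ is $\llbracket \mathbf x,\mathbf y\rrbracket_B=\mathbf{xby}-\mathbf{ybx}$, and the bracket in $\mathfrak h^H_{-1}$ is $\llbracket \mathbf u,\mathbf v\rrbracket_H=\mathbf u\mathbf h\mathbf v-\mathbf v\mathbf h\mathbf u$. For the right-hand side,
\[
\llbracket \psi\mathbf x,\psi\mathbf y\rrbracket_H=(-\mathbf x^t)\mathbf b^t(-\mathbf y^t)-(-\mathbf y^t)\mathbf b^t(-\mathbf x^t)=\mathbf x^t\mathbf b^t\mathbf y^t-\mathbf y^t\mathbf b^t\mathbf x^t .
\]
For the left-hand side,
\[
\psi(\llbracket\mathbf x,\mathbf y\rrbracket_B)=-(\mathbf{xby}-\mathbf{ybx})^t=-\mathbf y^t\mathbf b^t\mathbf x^t+\mathbf x^t\mathbf b^t\mathbf y^t .
\]
These two expressions agree, so $\psi$ is a Lie algebra isomorphism. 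Composing $\psi$ with the isomorphisms of Theorem~\ref{t4} for $B$ and for $H$ gives $\mathfrak g^{B}_{-1}\simeq\mathfrak h^{H}_{-1}$ for arbitrary $B,H$ of equal rank.

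The only delicate step is the sign. Plain transposition is an anti-homomorphism, because it reverses the order of products; the minus sign, which is the one introduced by the supertranspose in Lemma~\ref{l4}, converts it into a homomorphism.
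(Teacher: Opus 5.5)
Your proof is correct and is essentially the paper's argument. The paper also reduces to normal forms so that $H=B^t$, and it uses the map $\mathfrak{t}\circ\Phi_Q$, which by Lemma~\ref{l4} acts on $\mathfrak{g}_{-1}$ exactly as your $\psi(\mathbf{x})=-\mathbf{x}^t$; it then checks the bracket identity with the same transpose computation, and your block-level definition just drops the $\Phi_Q$/supertranspose packaging and makes the sign explicit.
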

    \begin{proof}
    By Theorem~\ref{t4}, we may assume without loss of generality that $B$ and $H$ are in normal form, which implies $H = B^t$. 
    Let $\pi$ be the canonical projection, $\mathfrak{t}$ be the supertranspose transformation such that $\mathfrak{t}(W) = W^{st}$, and $\Phi_Q$ be as in Proposition~\ref{p6}. Then we can construct the following commutative diagram:
    \[
    \begin{tikzcd}[column sep=large, row sep=large]
        \mathfrak{gl}_{m\vert n} \arrow[r, "\Phi_Q"] \arrow[d, "\pi"'] & \mathfrak{gl}_{n\vert m} \arrow[d, "\pi \circ \mathfrak{t}"] \\
        \mathfrak{g}_{-1} \arrow[r, "\mathfrak{t} \circ \Phi_Q"] & \mathfrak{h}_{-1}
    \end{tikzcd}
    \]

    Let us prove that $\mathfrak{t} \circ \Phi_Q$ is a homomorphism of Lie algebras, i.e., $\mathfrak{t} \circ \Phi_Q (\llbracket X, Y \rrbracket_B) = \llbracket \mathfrak{t} \circ \Phi_Q (X), \mathfrak{t} \circ \Phi_Q (Y) \rrbracket_{H}$.\\
    Let $X,Y \in \mathfrak{g}_{-1}$.
    Using Lemma~\ref{l3} to expand the bracket we have
    \begin{align*}
    \mathfrak{t} \circ \Phi_Q (\llbracket X, Y \rrbracket_B) &= \mathfrak{t}(Q (\llbracket X, Y \rrbracket_B) Q^{-1}) \\
    &= \mathfrak{t}\left( \bordermatrix{ & m & n \cr n & 0 & I_n \cr m & I_m & 0 \cr} \bordermatrix{ & m & n \cr m & 0 & 0 \cr n & XBY-YBX & 0 \cr} \bordermatrix{ & n & m \cr m & 0 & I_m \cr n & I_n & 0 \cr} \right) \\
    &= \mathfrak{t}\left(\bordermatrix{ & m & n \cr n & XBY-YBX & 0 \cr m & 0 & 0 \cr} \bordermatrix{ & n & m \cr m & 0 & I_m \cr n & I_n & 0 \cr}\right) \\
    &= \mathfrak{t}\left( \bordermatrix{ & n & m \cr n & 0 & XBY-YBX \cr m & 0 & 0 \cr} \right) \\
    &= \bordermatrix{ & n & m \cr n & 0 & 0 \cr m & X^tB^tY^t- Y^tB^tX^t & 0 \cr} \\
    &= \llbracket \mathfrak{t} \circ \Phi_Q (X),\mathfrak{t} \circ \Phi_Q (Y) \rrbracket_H.
    \end{align*}
    Note that the last equality holds precisely because $H = B^t$.
    Therefore, $\mathfrak{t} \circ \Phi_Q$ is a homomorphism and an isomorphism of Lie algebras.

    \end{proof}

    \begin{theorem}
    \label{t8}
    Consider the Lie algebras $\mathfrak{g}^{B}_{-1}$ obtained from $\mathfrak{gl}_{m\vert n}$   and $ \mathfrak{h}^{H}_{-1}$ obtained from $\mathfrak{gl}_{p\vert q}$, where $B$ and $H$ are as defined on Lemma~\ref{l2}, $\rank B = r_1 \ge 1$ and $\rank H = r_2 \ge 1$.\\
    Then  $$\mathfrak{g}^{B}_{-1} \simeq \mathfrak{h}^{H}_{-1} \Leftrightarrow r_1 = r_2 \text{ and } \{m,n\} = \{p,q\}.$$
    \end{theorem}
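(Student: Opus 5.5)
The plan is to prove the two directions separately: the converse by assembling isomorphisms already constructed, and the forward direction by comparing isomorphism invariants computed in Lemma~\ref{l5} and Corollary~\ref{c2}.

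\emph{Converse.} Suppose $r_1=r_2$ and $\{m,n\}=\{p,q\}$.
If $(p,q)=(m,n)$, then $\mathfrak{gl}_{p\vert q}=\mathfrak{gl}_{m\vert n}$, and $B$, $H$ are two odd elements of equal rank. Theorem~\ref{t4} (equivalently Theorem~\ref{t5}) gives $\mathfrak{g}^B_{-1}\simeq\mathfrak{h}^H_{-1}$.
If $(p,q)=(n,m)$, then Theorem~\ref{t7} applies directly.

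\emph{Forward direction.} Let $\varphi:\mathfrak{g}^B_{-1}\to\mathfrak{h}^H_{-1}$ be an isomorphism. It preserves the dimension, solvability, the isomorphism type of the Levi factor (by Theorem~\ref{t1}, or simply $L/\mathfrak{r}(L)$), and the dimension of the center.

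\emph{Step 1: dimensions.} Since $\dim\mathfrak{g}_{-1}=mn$ and $\dim\mathfrak{h}_{-1}=pq$, we get $mn=pq$.

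\emph{Step 2: ranks.} By Corollary~\ref{c2}, $L/\mathfrak{r}(L)\simeq\mathfrak{sl}(r)$. This has dimension $r^2-1$, which determines $r$ whenever $r\ge 2$. The degenerate case $r=1$ must be handled separately, since then $\mathfrak{sl}(1)=0$. Here I use solvability: $\mathfrak{g}^B_{-1}$ is solvable iff $r_1=1$, because for $r_1\ge 2$ the algebra contains $\mathfrak{sl}(r_1)\neq 0$. Solvability is preserved by $\varphi$, and for $r\ge 2$ the Levi dimensions $r^2-1$ are distinct. Together these give $r_1=r_2=:r$.

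\emph{Step 3: the pair $\{m,n\}$.} Use the center via Lemma~\ref{l5}(c).
First suppose $r=m=n$, so $mn=r^2$. Then $pq=r^2$ with $r\le\min(p,q)$, which forces $p=q=r$, and we are done. The same argument applies symmetrically if $r=p=q$.
Otherwise, on both sides $\dim Z=(m-r)(n-r)$, respectively $(p-r)(q-r)$. This also covers case (c) of Corollary~\ref{c1}, where the formula correctly yields $0$. Expanding,
\[
mn-r(m+n)+r^2=pq-r(p+q)+r^2 .
\]
Since $mn=pq$ and $r\ge1$, this gives $m+n=p+q$. Then $m,n$ and $p,q$ are the roots of the same quadratic $t^2-(m+n)t+mn$, so $\{m,n\}=\{p,q\}$.

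\emph{Expected obstacle.} The main care point is the bookkeeping of degenerate cases: $r=1$, where the Levi factor is trivial, and $r=m=n$, where the center formula switches. The solvability argument and the square-forcing argument above are designed to handle exactly these cases.
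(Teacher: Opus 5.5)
Your proof is correct and follows essentially the same route as the paper. The converse goes via Theorem~\ref{t4}/\ref{t5} and Theorem~\ref{t7}; the forward direction compares the invariants $\dim\mathfrak{s}=r^2-1$, $mn=pq$ and the center dimension (with the case $r=m=n$ forced by $pq=r^2$, $p,q\ge r$), then uses the quadratic-roots argument. Your separate solvability argument for $r=1$ is harmless but unnecessary, since $r\mapsto r^2-1$ is already injective on $r\ge 1$.
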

    
    \begin{proof}
    First, suppose that $\mathfrak{g}^{B}_{-1} \simeq \mathfrak{h}^{H}_{-1}$. An isomorphism of Lie algebras preserves the Levi-Malcev decomposition, meaning the semisimple Levi factors, solvable radicals, and centers must have equal dimensions. 
    
    From the dimension of the Levi factor ($\dim \mathfrak{s}(\mathfrak{g}^{B}_{-1}) = r_1^2 - 1 = r_2^2 - 1$), we immediately obtain $r_1 = r_2$. Let us denote this common rank by $r$. Equating the dimensions of the radicals ($\dim \mathfrak{r}(\mathfrak{g}^{B}_{-1}) = mn - r^2 + 1$), we get $mn = pq$. If $r \neq 0$, equating the dimensions of the centers $\dim Z = (m-r)(n-r) = mn - r(m+n) + r^2$ directly yields $m+n = p+q$, since $mn = pq$. (When $r=m=n$, we have $pq = r^2$ and $p, q \geq r \implies p=q=r$, which trivially gives $m+n=p+q$). The sum and product equalities $m+n = p+q$ and $mn = pq$ imply that the elements of the sets $\{m, n\}$ and $\{p, q\}$ are the roots of the same quadratic equation $x^2 - (m+n)x + mn = 0$. Therefore, the unordered pairs are equal: $\{m, n\} = \{p, q\}$.
    
    For the converse, if $\{m,n\} = \{p,q\}$, then either $(m,n) = (p,q)$, which yields isomorphic algebras by Theorem~\ref{t5}, or $(m,n) = (q,p)$, which yields isomorphic algebras by Theorem~\ref{t7}.
    \end{proof}
    
    \subsection{Illustrative Examples}

    \begin{example}
    Let $X = \mathfrak{g}^{B_{r_i}}_{-1}$. Then\\
    1) If $\rank B_{r_i} = 0$ then $B_{r_i} = 0$ and, from Lemma~\ref{l3}, $X$ is commutative;\\
    2) If $\rank B_{r_i} = 1$, $\dim \mathfrak{s} (X) = {r_i}^2 - 1 = 0$. Then by the Levi-Malcev decomposition we have that $X$ is solvable;\\
    3) If $m=n$ and $\rank B_{r_i}=n$ then $\dim \mathfrak{r} (X) = 1$. Note that in this case $\mathfrak{r} (X)$ consists of the scalar matrices $\mathbb{K}I_n$ and $\mathfrak{s} (X)$ is isomorphic to $\mathfrak{sl}(n)$. Since both $\mathfrak{r}(X) = \mathbb{K}I_n$ and $\mathfrak{s}(X)$ are ideals, the adjoint action of $\mathfrak{s}$ on $\mathfrak{r}$ is zero, so the semidirect sum of the Levi-Malcev decomposition becomes a direct sum and we can conclude that $X = \mathbb{K}I_n \oplus \mathfrak{sl}(n) \simeq \mathfrak{gl}(n)$.
    \end{example}
    
    \begin{example} 
    Let $X = \mathfrak{g}^{B_{2}}_{-1}$ be obtained from $\mathfrak{gl}_{5\vert 6}$ ($m=5$, $n=6$, $r=2$). 
    Using the formulas from our Levi-Malcev decomposition, we have $\dim \mathfrak{s}(X) = 3$, $\dim \mathfrak{r}(X) = 27$, and $\dim Z(X) = 12$. 
    By Theorem~\ref{t8}, any isomorphic algebra $W = \mathfrak{h}^{H}_{-1}$ (from $\mathfrak{gl}_{p\vert q}$ with $\rank H = r'$) must share these dimensions. This yields $r' = 2$, $pq = 30$, and $p+q = 11$. The unordered pair $\{p,q\}$ is uniquely determined as $\{5,6\}$ (the roots of $x^2 - 11x + 30 = 0$). Thus, the only algebra isomorphic to $X$ aside from itself is the one obtained from $\mathfrak{gl}_{6\vert 5}$ with $r=2$.
    \end{example}
    
    \begin{example}
    Let $X = \mathfrak{g}^{B_{5}}_{-1}$ be obtained from $\mathfrak{gl}_{7\vert 7}$ ($m=7$, $n=7$, $r=5$). 
    Here, $\dim \mathfrak{s}(X) = 24$, $\dim \mathfrak{r}(X) = 25$, and $\dim Z(X) = 4$. 
    Any isomorphic algebra $W$ must satisfy $r' = 5$, $pq = 49$, and $p+q = 14$. The only solution is $p=q=7$ (a double root of $x^2 - 14x + 49 = 0$). Hence, $X$ is uniquely identified up to isomorphism.
    \end{example}

    \section{Conclusion}\label{sec13}
    In this paper, we achieved a complete classification of the Lie algebras $\mathfrak{g}_{-1}^{B}$ obtained from the general linear Lie superalgebra $\mathfrak{gl}_{m\vert n}$ via the derived bracket construction generated by an odd element $B$ with $B^2 = 0$. By explicitly computing the Levi-Malcev decomposition, we demonstrated that the algebraic structure of these Lie algebras is strictly determined by the rank $r$ of the matrix representation of $B$. Specifically, we identified the semisimple Levi factor as isomorphic to $\mathfrak{sl}(r)$ and provided exact formulations for both the solvable radical and the center across arbitrary dimensions.
    
    Beyond providing a concrete structural classification, this work highlights the efficacy of the derived bracket as a tool for generating highly structured, computable Lie algebras from superalgebraic origins. The explicit matrix analysis and decomposition techniques developed here lay a solid foundation for several natural extensions.
    
    A primary direction for future research is the generalization of these structural results to fields of prime characteristic. In this modular setting, the classical Levi-Malcev decomposition theorem does not hold in general, and the structural analysis presents significant new challenges. For instance, when the characteristic $p$ of the field divides the rank $r$, the trace operator behaves degenerately; this alters the simplicity of the $\mathfrak{sl}(r)$ component (often requiring the passage to $\mathfrak{psl}(r)$) and fundamentally modifies both the semisimple Levi factor and the center of the algebra. Investigating how the derived bracket interacts with these phenomena will be a natural continuation of this work.

    \noindent\textbf{Acknowledgements.} This paper was supported by Programa
    Institucional de Auxílio à Pesquisa de Docentes Recém-Contratados ou Recém-Doutorados, Universidade Federal de Minas Gerais 2018. I would like to thank my supervisor E. Vishnyakova for the efforts towards my formation. Without her constant support and guidance this paper would not have been possible. Many thanks to C. Schneider for the discussion about the case when the characteristic of $\mathbb{K}$ is prime and to R. Biezuner for pointing errors in the calculations and suggesting improvements.

    \end{document}